\DeclareMathOperator{\tr}{tr}
\newcommand{\paper}[6]{
#1, #2. #3 \textbf{#4}, #5 (#6)}
\newcommand{\book}[7]{
#1, #2. #3, vol. #4 (#5, #6, #7)}
\newcommand{\proc}[9]{
#1, #2, in #3, ed. by #4. #5, vol. #6 (#7, #8, #9)}
\newcommand{\procc}[8]{
#1, #2, in #3, #4, vol. #5 (#6, #7, #8)}
\newcommand{\proccc}[6]{
#1, #2, in #3, ed. by #4. #5 (#6)}
\begin{document}

\title*{Coherent state representations of the holomorphic automorphism group of a quasi-symmetric Siegel domain of type III}\titlerunning{Coherent state representations over a quasi-symmetric Siegel domain}
\author{Koichi Arashi}
\institute{Koichi Arashi \at Department of Mathematics, Tokyo Gakugei University, Nukuikita 4-1-1, Koganei,Tokyo 184-8501, Japan, \email{arashi@u-gakugei.ac.jp}
}
\maketitle

\abstract{
The aim of this paper is to study the classification of generic coherent state representations of a Lie group and its connection to the structure theory of K\"{a}hler algebras.
The group we deal with is the holomorphic automorphism group of a quasi-symmetric Siegel domain of type III.
A method using the generalized Iwasawa decomposition is provided.}

\section{Introduction}
A highest weight representation is a ubiquitous concept in the representation theory.
We recall a simple complex geometric condition in \cite{Lisiecki90}, which characterizes unitary highest weight representations of semisimple Lie groups and makes sense in the case of arbitrary Lie groups.
For a unitary representation $(\pi,\mathcal{H})$ of a connected Lie group $G$, we can regard the projective space $\mathcal{P}(\mathcal{H})$ as a (possibly infinite-dimensional) K\"{a}hler manifold.
Then Lisiecki called a $G$-orbit in $\mathcal{P}(\mathcal{H})$ a coherent state orbit (CS orbit for short) if it is a complex submanifold of $\mathcal{P}(\mathcal{H})$ and $\pi$ a coherent state representation (CS representation for short) if it admits a CS orbit in $\mathcal{P}(\mathcal{H})$ which does not reduce to a point (see \cite{Lisiecki95}).
The classification of generic CS representations is established for unimodular Lie groups \cite{Lisiecki91}.
By Neeb\cite{Neeb}, a generalization of unitary highest weight representations and its relation to the CS representations are studied in the setting of Lie groups which have compactly embedded Cartan subalgebras.

As an irreducible unitary highest weight representation of a non-compact Lie group can be realized on the space of vector-valued holomorphic functions on a bounded symmetric domain, 
for a complex bounded domain $\mathcal{D}$ and Lie group $G$ acting transitively on $\mathcal{D}$, an irreducible CS representation of a Lie group $G$ realized on the space of such functions can be considered as a straightforward generalization of a unitary highest weight representation or its restriction.
Let $\mathrm{Aut}_{hol}(\mathcal{D})$ be the holomorphic automorphism group, $B$ a maximal connected triangular Lie subgroup of the identity component $\mathrm{Aut}_{hol}(\mathcal{D})^o$.
For the scalar valued case, Ishi \cite{Ishi11} classified unitarizations of holomorphic multiplier representations of $B$.
For a real algebraic group $G\subset\mathrm{Aut}_{hol}(\mathcal{D})$, which contains $B$, the author proved in \cite{Arashi20} that if all holomorphic multipliers are given, then the classification of unitarizations for the identity component $G^o$ is reduced to the one for $B$ and taking look at certain one-dimensional representation of a maximal compact subgroup $U$ of $G^o$.
Note that $G^o$ is topologically decomposed into the direct product of $B$ and $U$ (see \cite[Chapter 4, Theorem 4.7]{Enc}), which is called a generalized Iwasawa decomposition.
In the previous work \cite{Arashi}, the author classified irreducible CS representations of the identity component $G^o$ of a certain twelve-dimensional real algebraic group $G$.
For a closely related work for split solvable Lie groups, we refer to \cite{Ishi05}.

In this paper, we deal with generic CS representations of the universal covering group $G$ of $\mathrm{Aut}_{hol}(\mathcal{D})^o$ when $\mathcal{D}$ is a quasi-symmetric Siegel domain of type III$_{\nu,r}$ with $\nu\geq 3, r\geq 2$.

In Section \ref{sec:Siegel}, we determine the structure of $\mathrm{Lie}(G)$ by using basic results of the structure theory in \cite{Satake}.
In Section \ref{sec:Realization}, we review briefly some facts about a realization of an irreducible CS representation studied in \cite{Lisiecki90, Lisiecki91}.
In Section \ref{sec:Algebraic}, we determine a basic algebraic structure of CS orbit by using the structure theory of K\"{a}hler algebras studied in \cite{Borel,DN,GPV,Matsushima,Shima,VGP}.
In Section \ref{sec:Holomorphic}, we determine $G$-equivariant holomorphic vector bundles of which a generic CS representation is realized in the space of holomorphic sections by using the holomorphic embeddings of K\"{ahler} homogeneous manifolds in \cite{DN}, the classification of the structures of $G$-equivariant holomorphic vector bundles in \cite{TW}, and the rigidity of invariant complex structures of a K\"{a}hler solvmanifold studied in \cite{Dotti}.
In Section \ref{sec:Equivalence}, we see a relationship between a generic CS representation of $G$ and the one of a maximal connected triangular Lie subgroup $B$ of $G$ together with a finite-dimensional representation of a maximal compact subgroup of $G$.

\section{Siegel domain and holomorphic complete vector fields}
\label{sec:Siegel}
Let $\nu\geq3,r\geq 2$ be positive integers, $\mathop{\mathrm{Sym}}_\nu(\mathbb{R})$ ($\mathop{\mathrm{Sym}}_\nu(\mathbb{C})$) the space of $\nu$-by-$\nu$ real (complex) symmetric matrices, $\Omega\subset \mathop{\mathrm{Sym}}_\nu(\mathbb{R})$ the set of all positive-definite matrices, and $Q:\mathop{\mathrm{Mat}}_{\nu,r}(\mathbb{C})\times\mathop{\mathrm{Mat}}_{\nu,r}(\mathbb{C})\rightarrow\mathop{\mathrm{Sym}}_\nu(\mathbb{C})$ a Hermitian map defined by $Q(U,V):=(U\overline{{}^tV}+\overline{V}{}^t U)/4$, where for a complex matrix $X$, we denote by $\overline{X}$ and ${}^tX$ the complex conjugate and the transpose of $X$, respectively.
We consider the following quasi-symmetric Siegel domain $\mathcal{D}$ of type III$_{\nu,r}$:
\begin{equation*}
\mathcal{D}:=\left\{(Z,V)\in\mathrm{Sym}_\nu(\mathbb{C})\times\mathrm{Mat}_{\nu,r}(\mathbb{C})\mid \mathop{\mathrm{Im}}Z-Q(V,V)\in\Omega\right\}.
\end{equation*}
Let $\mathrm{Aut}_{hol}(\mathcal{D})$ be the holomorphic automorphism group of $\mathcal{D}$.
The group admits a unique structure of a Lie group which induces a $C^\infty$-group action on $\mathcal{D}$ and is compatible with the compact-open topology.
By the structure theory of the Lie algebra of the holomorphic automorphism group of a Siegel domain, we can determine $\mathfrak{g}:=\mathop{\mathrm{Lie}}(\mathrm{Aut}_{hol}(\mathcal{D}))$.
We refer the reader to {\cite[Chapter V]{Satake}} for preliminary results.
For $T\in\mathfrak{gl}(\nu,\mathbb{R})$, $K\in\mathop{\mathfrak{u}}(r)$, $V\in\mathrm{Mat}_{\nu,r}(\mathbb{C})$, and $U\in\mathrm{Sym}_\nu(\mathbb{R})$, put
\begin{equation*}
\iota(T,K,V,U)=\left(\begin{array}{cccc}\mathop{\mathrm{Re}}K&0&-\mathop{\mathrm{Im}}K&\mathop{\mathrm{Re}}{}^tV\\
\mathop{\mathrm{Im}}V&T&\mathop{\mathrm{Re}}V&U\\
\mathop{\mathrm{Im}}K&0&\mathop{\mathrm{Re}}K&-\mathop{\mathrm{Im}}{}^tV\\
0&0&0&-{}^tT\end{array}\right)\in\mathrm{Mat}_{2(\nu+r)}(\mathbb{R}).
\end{equation*}
\begin{proposition}
The Lie algebra $\mathfrak{g}$ is isomorphic to the linear Lie algebra $\widetilde{\mathfrak{g}}$ consists of all matrices $\iota(T,K,V,U)$.
\end{proposition}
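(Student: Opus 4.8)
The plan is to apply the structure theory of the Lie algebra of holomorphic complete vector fields on a Siegel domain (\cite[Chapter V]{Satake}) to the explicit data $\Omega\subset\mathrm{Sym}_\nu(\mathbb{R})$ and $Q\colon\mathrm{Mat}_{\nu,r}(\mathbb{C})\times\mathrm{Mat}_{\nu,r}(\mathbb{C})\to\mathrm{Sym}_\nu(\mathbb{C})$ attached to $\mathcal{D}$. Recall that $\mathfrak{g}$ carries the grading $\mathfrak{g}=\mathfrak{g}_{-1}\oplus\mathfrak{g}_{-1/2}\oplus\mathfrak{g}_{0}\oplus\mathfrak{g}_{1/2}\oplus\mathfrak{g}_{1}$ defined by the eigenvalues of $\mathrm{ad}$ of the infinitesimal generator of the dilations $(Z,V)\mapsto(e^{2s}Z,e^{s}V)$. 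The negative part is read off at once: $\mathfrak{g}_{-1}$ is the space of translations $(Z,V)\mapsto(Z+U,V)$ with $U\in\mathrm{Sym}_\nu(\mathbb{R})$, and $\mathfrak{g}_{-1/2}$ is spanned by the generators of the one-parameter groups $(Z,V)\mapsto(Z+2isQ(V,C)+is^{2}Q(C,C),\,V+sC)$ with $C\in\mathrm{Mat}_{\nu,r}(\mathbb{C})$; these preserve $\mathcal{D}$ because $Q$ is Hermitian and $Q(C,C)$ is real, and they account for the parameters $U$ and $V$ in $\iota$. Moreover the bracket $[\mathfrak{g}_{-1/2},\mathfrak{g}_{-1/2}]\subseteq\mathfrak{g}_{-1}$ is, up to a constant, $(C,C')\mapsto\mathrm{Im}\,Q(C,C')$, which matches the commutators of the corresponding $\iota$-blocks.

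Next I would compute $\mathfrak{g}_{0}$. By Satake it consists of the pairs $(A,B)$, realised as linear vector fields $(Z,V)\mapsto(AZ,BV)$, where $A$ lies in the Lie algebra $\mathfrak{g}(\Omega)$ of the linear automorphism group of $\Omega$ and $B$ is an $\mathbb{R}$-linear endomorphism of $\mathrm{Mat}_{\nu,r}(\mathbb{C})$ satisfying $Q(Bx,y)+Q(x,By)=A\cdot Q(x,y)$ for all $x,y$. Since $\Omega$ is the self-dual homogeneous cone of positive-definite symmetric matrices, one has $\mathfrak{g}(\Omega)=\{X\mapsto TX+X\,{}^{t}T:T\in\mathfrak{gl}(\nu,\mathbb{R})\}$. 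Plugging in the explicit formula for $Q$, one verifies that, for the $A$ attached to $T$, the map $B\colon V\mapsto TV$ satisfies the compatibility relation, that the maps $V\mapsto VK_{0}$ with $K_{0}\in\mathfrak{u}(r)$ span the kernel of that relation (here one uses $K_{0}^{*}=-K_{0}$), and — this requires a short argument ruling out further $\mathbb{R}$-linear solutions — that there is nothing else. Hence $\mathfrak{g}_{0}\cong\mathfrak{gl}(\nu,\mathbb{R})\oplus\mathfrak{u}(r)$, the two summands commuting because $T$ acts on $\mathrm{Mat}_{\nu,r}(\mathbb{C})$ on the left and $K_{0}$ on the right; this accounts for the parameters $T$ and $K$ in $\iota$.

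The crucial step is to show $\mathfrak{g}_{1/2}=0$ and $\mathfrak{g}_{1}=0$, i.e.\ that $\mathcal{D}$ carries no complete holomorphic vector field of positive homogeneity. A nonzero element of $\mathfrak{g}_{1}$ would correspond to a nonzero $c\in\mathrm{Sym}_\nu(\mathbb{R})$ carrying a quadratic vector field compatible with $Q$, and a nonzero element of $\mathfrak{g}_{1/2}$ to a nonzero datum in $\mathrm{Mat}_{\nu,r}(\mathbb{C})$ satisfying a cocycle identity with $Q$; although the cone $\Omega$ is self-dual, so that the tube $\mathrm{Sym}_\nu(\mathbb{C})+i\Omega$ over it alone would contribute a full $\mathfrak{g}_{1}$, I expect that substituting the explicit $Q$ into the defining relations in \cite[Chapter V]{Satake} forces the trivial solution, reflecting that $\mathcal{D}$ is not symmetric for $\nu\geq3$, $r\geq2$. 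Carrying out this vanishing verification is the main obstacle. Granting it, $\mathfrak{g}=\mathfrak{g}_{-1}\oplus\mathfrak{g}_{-1/2}\oplus\mathfrak{g}_{0}$, so the assignment sending the vector field attached to $(T,K,V,U)$ to the matrix $\iota(T,K,V,U)$ is a linear bijection onto $\widetilde{\mathfrak{g}}$; it remains to check on the listed generators that it intertwines the bracket of vector fields with the matrix commutator — a direct block computation from the descriptions of the $\mathfrak{g}_{k}$ above, which in particular shows $\widetilde{\mathfrak{g}}$ is closed under the commutator. This yields $\mathfrak{g}\cong\widetilde{\mathfrak{g}}$.
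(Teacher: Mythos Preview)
Your overall architecture --- grade $\mathfrak{g}$ by the dilation element and identify each homogeneous piece --- is exactly what the paper does (its sign convention is opposite to yours, so its $\mathfrak{X}_{-1},\mathfrak{X}_{-1/2}$ are your $\mathfrak{g}_{1},\mathfrak{g}_{1/2}$; also, with the one-parameter group $(Z,V)\mapsto(e^{2s}Z,e^{s}V)$ you wrote, the eigenvalues would be $0,\pm1,\pm2$ rather than $0,\pm\tfrac12,\pm1$, a harmless normalisation slip). Your descriptions of the translation piece, of the piece carrying the parameter $V$, and of $\mathfrak{g}_0\cong\mathfrak{gl}(\nu,\mathbb{R})\oplus\mathfrak{u}(r)$ via the compatibility condition $A\cdot Q(x,y)=Q(Bx,y)+Q(x,By)$ are correct and match the paper's sketch.

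The one genuine gap is the point you yourself flag as the ``main obstacle'': you have not shown that the positively graded pieces $\mathfrak{g}_{1/2}$ and $\mathfrak{g}_{1}$ vanish --- you only say you \emph{expect} the explicit $Q$ to force the trivial solution in Satake's equations. The paper does not attempt this direct computation at all. Instead it invokes a structural theorem of Kaup--Matsushima--Ochiai \cite[Theorems~4 and~5]{KMO}: for an irreducible quasi-symmetric Siegel domain that is \emph{not symmetric}, these two graded pieces vanish automatically. Since the type~III$_{\nu,r}$ domain with $\nu\geq 3$, $r\geq 2$ is exactly of this kind, the vanishing is immediate, and what remains is precisely the matching of the three nonzero pieces with the block entries of $\iota(T,K,V,U)$ and the bracket check you outline. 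So your plan is right, but the missing ingredient is the KMO theorem; citing it closes the gap without the cocycle-chasing you were anticipating.
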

\begin{proof}
We shall give a sketch of the proof.
In the following remark, we will mention an explicit correspondence between the Lie algebras.
It is known that $\mathfrak{g}$ can be identified with the Lie algebra $\mathfrak{X}$ of all complete holomorphic vector fields over $\mathcal{D}$.
Let $\partial\in\mathfrak{X}$ be given by the one parameter subgroup $\mathcal{D}\ni(Z,V)\mapsto(e^{-t}Z,e^{-t/2}V)\in\mathcal{D}\,(t\in\mathbb{R})$ of $\mathrm{Aut}_{hol}(\mathcal{D})$.
For $\lambda\in\mathbb{R}$, put $\mathfrak{X}_\lambda:=\{X\in\mathfrak{X}\mid[\partial,X]=\lambda X\}$.
It is known \cite[Theorems 4 and 5]{KMO} that the Lie algebra $\mathfrak{X}$ has a grading structure
\begin{equation*}
\mathfrak{X}=\mathfrak{X}_{-1}+\mathfrak{X}_{-1/2}+\mathfrak{X}_0+\mathfrak{X}_{1/2}+\mathfrak{X}_1
\end{equation*}
and in our case, $\mathfrak{X}_{-1}=\mathfrak{X}_{-1/2}=0$ since $\mathcal{D}$ is an irreducible quasi-symmetric Siegle domain and not symmetric.
By the explicit descriptions of $\mathfrak{X}_\lambda\,(\lambda=0,1/2,1)$ and the bracket relations among them, we can confirm the assertion.
\qed
\end{proof}
\begin{remark}
We can realize $\mathcal{D}$ in $\mathrm{Sym}_{\nu+r}(\mathbb{C})$ (see \cite{Ishi}), and the group $\exp(\widetilde{\mathfrak{g}})\subset GL(2(\nu+r),\mathbb{R})$ acts on $\mathcal{D}$ by generalized fractional linear transformations and is isomorphic to the identity component $\mathrm{Aut}_{hol}(\mathcal{D})^o$ of $\mathrm{Aut}_{hol}(\mathcal{D})$.
\end{remark}
\section{Realization of a CS representation}
\label{sec:Realization}
In this section, we refer the reader to \cite{Lisiecki90} for details about general theory of CS representations.
Let $G$ be a connected and simply connected Lie group with $\mathrm{Lie}(G)=\mathfrak{g}$.
Let $(\pi,\mathcal{H})$ be a generic (i.e., irreducible with discrete kernel) CS representation of $G$.
Then it is known that $G$ acts on a CS orbit almost effectively.
For a holomorphic vector bundle $E$ over a complex manifold $M_0$, let us denote by $\Gamma^{hol}(M_0,E)$ the space of holomorphic sections.
Then the conjugate representation $\overline{\pi}$ can be realized in $\Gamma^{hol}(M,L)$ for some $G$-equivariant holomorphic line bundle $L$ over a CS orbit $M$, which admits a K\"{a}hler structure inherited from the Fubini-Study metric on the projective space $\mathcal{P}(\mathcal{H})$.
We get a connected subgroup $K$ of $G$ and an effective K\"{a}hler algebra $(\mathfrak{g},\mathfrak{k},\rho,j)$ with $G/K\simeq M$ and $\mathrm{Lie}(K)=\mathfrak{k}$ (see {\cite{Lisiecki95},\cite[Theorem 2.17]{RV}}) .
\section{Algebraic structures of a CS orbit}
\label{sec:Algebraic}
In this section, we refer the reader to \cite{GPV} and \cite{VGP} for basic definitions and preliminary results on K\"{a}hler algebra.
By Dorfmeister-Nakajima \cite[\S1, 1.4., LEMMA, \S2, 2.5., THEOREM]{DN}, there exists an abelian K\"{a}hler ideal $\mathfrak{a}$ and a K\"{a}hler subalgebra $\mathfrak{h}$ of $\mathfrak{g}$ such that $\mathfrak{g}=\mathfrak{a}\oplus\mathfrak{h},\,\rho(\mathfrak{a},\mathfrak{h})=0$,
and $(\mathfrak{h},\mathfrak{k},j)$ is a $j$-algebra.
Moreover, let $\mathfrak{k}_0$ be the maximal ideal of $\mathfrak{h}$ contained in $\mathfrak{k}$.
Then there exists an effective K\"{a}hler ideal $\mathfrak{h}'$ of $\mathfrak{h}$ such that $\mathfrak{h}=\mathfrak{h}'\oplus\mathfrak{k}_0$.
Let $\mathfrak{r}$ be a maximal abelian ideal of the first kind in $\mathfrak{h}'$, i.e., there exists $r\in\mathfrak{r}$ such that $[jx,r]=x\,(x\in\mathfrak{r})$.
Then by \cite[\S2]{VGP}, the real part of $\mathrm{ad}(jr)|_{\mathfrak{h}'}$ is semisimple and the eigenspace decomposition is given by $\mathfrak{h}'=\mathfrak{h}'_0+\mathfrak{h}'_{1/2}+\mathfrak{h}'_{1}$ with $\mathfrak{h}'_1=\mathfrak{r}$,
where for $\lambda\in\mathbb{R}$, we denote by $\mathfrak{h}'_\lambda$ the associated eigenspace.
Moreover, if we put $\mathfrak{s}:=\{x\in\mathfrak{h}'_0\mid[x,r]=0\}$, then $\mathfrak{s}$ is a reductive $j$-subalgebra and contains $\mathfrak{h}'\cap\mathfrak{k}$ and we have $\mathfrak{h}'_0=\mathfrak{s}\oplus j\mathfrak{r}$.

Let $\mathfrak{u}$ be the maximal compact subalgebra of $\mathfrak{s}$ which contains $\mathfrak{h}'\cap\mathfrak{k}$.
By Matsushima \cite{Matsushima}, the center $\mathfrak{z}(\mathfrak{s})$ is a $j$-subalgebra of $\mathfrak{s}$ and thus $\mathfrak{z}(\mathfrak{s})\subset\mathfrak{h}'\cap\mathfrak{k}$.
Hence by Borel \cite{Borel}, $(\mathfrak{u},\mathfrak{h}'\cap\mathfrak{k},j)$ is a $j$-subalgebra and $\mathfrak{h}'\cap\mathfrak{k}=\mathfrak{z}_\mathfrak{u}(\mathfrak{c})$ with $\mathfrak{c}\subset\mathfrak{u}$ abelian.
For a Lie algebra $\mathfrak{b}$, let us denote by $\mathrm{Int}(\mathfrak{b})$ the adjoint group of $\mathfrak{b}$.
Now $(\mathfrak{h}',\mathfrak{h}'\cap\mathfrak{k},j)$ is algebraic, and hence by Shima \cite{Shima}, $\exp\mathrm{ad}(\mathfrak{u})\subset \mathrm{Int}(\mathfrak{h}')$ is a maximal compact subgroup and there exists a triangular subalgebra $\mathfrak{t}$ of $\mathfrak{h}'$ satisfying the following conditions:
\begin{enumerate}
\item
$\mathfrak{h}'=\mathfrak{u}\oplus\mathfrak{t}$;
\item
there exists an abelian subalgebra $\mathfrak{a}_\mathfrak{t}$ of $\mathfrak{t}$ such that $\mathfrak{t}=[\mathfrak{t},\mathfrak{t}]\oplus\mathfrak{a}_\mathfrak{t}$ and $\mathrm{ad}(x):\mathfrak{h}'\rightarrow\mathfrak{h}'$ is real semisimple for all $x\in\mathfrak{a}_\mathfrak{t}$.
\end{enumerate}
We have the following lemma.
\begin{lemma}
$\exp\mathrm{ad}(\mathfrak{u}\oplus\mathfrak{k}_0)$ is a maximal compact subgroup of $\mathrm{Int}(\mathfrak{g})$.
\end{lemma}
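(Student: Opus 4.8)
The plan is to produce a vector-space decomposition $\mathfrak{g}=\mathfrak{m}\oplus\mathfrak{n}$ into two Lie subalgebras, with $\mathfrak{m}:=\mathfrak{u}\oplus\mathfrak{k}_0$ compactly embedded in $\mathfrak{g}$ and $\mathfrak{n}:=\mathfrak{a}\oplus\mathfrak{t}$ triangular, and then to conclude by a dimension count against a generalized Iwasawa decomposition $\mathfrak{g}=\mathfrak{u}_{\mathrm{max}}\oplus\mathfrak{b}_{\mathrm{max}}$ of $\mathrm{Lie}(\mathrm{Int}(\mathfrak{g}))$ in the sense of \cite[Chapter 4, Theorem 4.7]{Enc}, where $\mathfrak{u}_{\mathrm{max}}$ is maximal compactly embedded and $\mathfrak{b}_{\mathrm{max}}$ maximal triangular. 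First I would record the algebraic facts. Since $\mathfrak{h}=\mathfrak{h}'\oplus\mathfrak{k}_0$ is a direct sum of \emph{ideals} of $\mathfrak{h}$ one has $[\mathfrak{h}',\mathfrak{k}_0]=0$, so $\mathfrak{m}$ is a subalgebra (a direct sum, as $\mathfrak{u}\subset\mathfrak{h}'$); since $\mathfrak{a}$ is an ideal of $\mathfrak{g}$ and $\mathfrak{t}$ a subalgebra, $\mathfrak{n}$ is a subalgebra; and from $\mathfrak{g}=\mathfrak{a}\oplus\mathfrak{h}$, $\mathfrak{h}=\mathfrak{h}'\oplus\mathfrak{k}_0$, $\mathfrak{h}'=\mathfrak{u}\oplus\mathfrak{t}$ one gets $\mathfrak{g}=\mathfrak{m}\oplus\mathfrak{n}$ with $\mathfrak{m}\cap\mathfrak{n}=0$. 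Note also that $\mathfrak{k}=(\mathfrak{h}'\cap\mathfrak{k})\oplus\mathfrak{k}_0\subset\mathfrak{m}$ and, since $\mathfrak{k}\subset\mathfrak{h}$, that $\mathfrak{a}\cap\mathfrak{k}=0$.

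Next I would show that $\mathfrak{n}$ is triangular. The algebra $\mathfrak{n}$ is solvable; for $x\in\mathfrak{a}$ the operator $\mathrm{ad}_\mathfrak{g}(x)$ maps $\mathfrak{g}$ into $\mathfrak{a}$ and annihilates $\mathfrak{a}$, hence $\mathrm{ad}_\mathfrak{g}(x)^2=0$, and elements of $[\mathfrak{t},\mathfrak{t}]$ act nilpotently; also $[\mathfrak{t},\mathfrak{k}_0]=0$. Thus triangularity reduces to checking that $\mathrm{ad}_\mathfrak{g}(x)$ has real spectrum for $x\in\mathfrak{a}_\mathfrak{t}$: on $\mathfrak{h}'$ this is condition (2), on $\mathfrak{k}_0$ it is trivial, and the only genuinely new summand is $\mathfrak{a}$, where one invokes the Dorfmeister--Nakajima description of the abelian Kähler ideal together with $\rho(\mathfrak{a},\mathfrak{h})=0$ to see that $\mathrm{ad}_\mathfrak{g}(\mathfrak{a}_\mathfrak{t})|_{\mathfrak{a}}$ is again real semisimple, consistently with the eigenvalues of $\mathrm{ad}(jr)$ in \cite[\S2]{VGP}. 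It then follows that $\mathrm{ad}_\mathfrak{g}(\mathfrak{n})$ is a triangular subalgebra of $\mathfrak{gl}(\mathfrak{g})$, so $N:=\exp\mathrm{ad}_\mathfrak{g}(\mathfrak{n})$ is a closed subgroup of $\mathrm{Int}(\mathfrak{g})$ homeomorphic to a Euclidean space, and $\mathfrak{n}$ is contained in a maximal triangular subalgebra; in particular $\dim\mathfrak{n}\le\dim\mathfrak{b}_{\mathrm{max}}$.

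Then I would show that $\mathfrak{m}=\mathfrak{u}\oplus\mathfrak{k}_0$ is compactly embedded in $\mathfrak{g}$. By Matsushima \cite{Matsushima}, Borel \cite{Borel} and \cite{DN}, $\mathfrak{u}$ and $\mathfrak{k}_0$ are reductive of compact type and $\mathrm{ad}_\mathfrak{h}(\mathfrak{k}_0)$ has purely imaginary spectrum; since $[\mathfrak{u},\mathfrak{k}_0]=0$, $\mathfrak{m}$ is reductive of compact type. It remains to check that $\mathrm{ad}_\mathfrak{g}(x)$ is semisimple with purely imaginary spectrum for $x\in\mathfrak{m}$. The decomposition $\mathfrak{a}\oplus\mathfrak{h}$ is $\mathrm{ad}(\mathfrak{m})$-stable; on $\mathfrak{h}$ the action of $\mathfrak{u}$ is skew for the Kähler metric of $\mathfrak{h}'$ (Shima) while $\mathrm{ad}_\mathfrak{h}(\mathfrak{k}_0)$ has imaginary spectrum; on $\mathfrak{a}$ the semisimple part of $\mathfrak{m}$ acts completely reducibly, hence compactly, and the toral centre $\mathfrak{z}(\mathfrak{m})$ lies in $\mathfrak{k}$ (because $\mathfrak{z}(\mathfrak{k}_0)\subset\mathfrak{k}_0\subset\mathfrak{k}$ and $\mathfrak{z}(\mathfrak{u})\subset\mathfrak{z}_{\mathfrak{u}}(\mathfrak{c})=\mathfrak{h}'\cap\mathfrak{k}$), so, $\mathfrak{a}$ being $\mathrm{ad}(\mathfrak{k})$-stable with $\mathfrak{a}\cap\mathfrak{k}=0$ and the Kähler metric on $\mathfrak{g}/\mathfrak{k}$ being $\mathrm{ad}(\mathfrak{k})$-invariant, $\mathrm{ad}_\mathfrak{g}(\mathfrak{z}(\mathfrak{m}))|_\mathfrak{a}$ is skew. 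Hence $\mathfrak{m}$ is compactly embedded, contained in a maximal compactly embedded $\mathfrak{u}_{\mathrm{max}}$, so $\dim\mathfrak{m}\le\dim\mathfrak{u}_{\mathrm{max}}$. Combining this with the previous step and $\dim\mathfrak{u}_{\mathrm{max}}+\dim\mathfrak{b}_{\mathrm{max}}=\dim\mathfrak{g}=\dim\mathfrak{m}+\dim\mathfrak{n}$ forces $\dim\mathfrak{m}=\dim\mathfrak{u}_{\mathrm{max}}$, so $\mathfrak{m}$ is (a conjugate of) a maximal compactly embedded subalgebra. Therefore $\exp\mathrm{ad}_\mathfrak{g}(\mathfrak{m})$ is closed and is a maximal compact subgroup of $\mathrm{Int}(\mathfrak{g})$, which is the assertion.

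The step I expect to be the main obstacle is the analysis of the action of the extra ideal $\mathfrak{a}$: Shima's theorem only controls $\mathrm{ad}$ on $\mathfrak{h}'$, so one has to verify separately, from the precise Dorfmeister--Nakajima splitting $\mathfrak{g}=\mathfrak{a}\oplus\mathfrak{h}$ with $\rho(\mathfrak{a},\mathfrak{h})=0$, that $\mathrm{ad}_\mathfrak{g}(\mathfrak{a}_\mathfrak{t})$ stays real semisimple on $\mathfrak{a}$ (keeping $\mathfrak{n}$ triangular) and that $\mathrm{ad}_\mathfrak{g}(\mathfrak{u}\oplus\mathfrak{k}_0)$ stays semisimple with purely imaginary spectrum on $\mathfrak{a}$ (keeping $\mathfrak{m}$ compactly embedded); this is where the geometry of the flat factor $\mathfrak{a}$ really enters. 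A subsidiary point is the closedness of $\exp\mathrm{ad}_\mathfrak{g}(\mathfrak{m})$, which I would settle through the maximality of $\mathfrak{m}$ among compactly embedded subalgebras established by the dimension count.
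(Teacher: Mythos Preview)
Your strategy is the paper's: split $\mathfrak{g}=(\mathfrak{u}\oplus\mathfrak{k}_0)\oplus(\mathfrak{t}\oplus\mathfrak{a})$, verify the second summand is triangular in $\mathfrak{g}$, and invoke \cite[Chapter~4, Propositions~4.4 and~4.5]{Enc} to conclude that $\exp\mathrm{ad}(\mathfrak{u}\oplus\mathfrak{k}_0)$ is maximal compact in $\mathrm{Int}(\mathfrak{g})$. The paper does not go through your separate dimension count or your explicit compactly-embedded verification for $\mathfrak{m}$; it appeals directly to the product decomposition and the cited propositions.

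For the step you correctly isolate as the obstacle---real semisimplicity of $\mathrm{ad}_\mathfrak{g}(x)|_\mathfrak{a}$ for $x\in\mathfrak{a}_\mathfrak{t}$---your reference to the Dorfmeister--Nakajima splitting and $\rho(\mathfrak{a},\mathfrak{h})=0$ is in the right direction but not yet a proof. The paper's precise mechanism is that the adjoint action of $\mathfrak{h}$ on $(\mathfrak{a},\rho|_\mathfrak{a},j)$, when restricted to each two-dimensional subalgebra $\langle jx,x\rangle$ with $x\in\mathfrak{a}_\mathfrak{t}$, is a \emph{normal symplectic representation} in the sense of \cite[Part~III]{GPV}; the structure theory of such representations forces the relevant $\mathrm{ad}$-operators to be real semisimple on $\mathfrak{a}$. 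With that in hand, your outline goes through.
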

\begin{proof}
The adjoint representation of $\mathfrak{g}$ on $(\mathfrak{a},\rho,j)$ is a normal symplectic representation when restricted to a subalgebra $\langle jx,x\rangle$ for any $x\in\mathfrak{a}_\mathfrak{t}$ (see \cite[Part III]{GPV}).
This implies that $\mathrm{ad}(jx):\mathfrak{g}\rightarrow\mathfrak{g}$ is real semisimple for all $x\in\mathfrak{a}_\mathfrak{t}$.
Hence $\mathrm{Int}(\mathfrak{g})$ is a product of a compact subgroup $\exp\mathrm{ad}(\mathfrak{u}\oplus\mathfrak{k}_0)$ and a connected triangular subgroup $\exp \mathrm{ad}(\mathfrak{t}\oplus\mathfrak{a})$, which implies that $\exp\mathrm{ad}(\mathfrak{u}\oplus\mathfrak{k}_0)$ is a maximal compact subgroup of $\mathrm{Int}(\mathfrak{g})$ (see \cite[Chapter 4, Propositions 4.4 and 4.5]{Enc}).
\qed
\end{proof}
Let $\mathfrak{g}_\lambda:=\{x\in\mathfrak{g}\mid[jr,x]=\lambda x\}\,(\lambda\in\mathbb{R})$.
It is known that all possible eigenvalues of $\mathrm{ad}(jr)$ are $0,\pm 1/2,1$ and $\dim\mathfrak{a}\cap\mathfrak{g}_{-1/2}=\dim\mathfrak{a}\cap\mathfrak{g}_{1/2}$.
\begin{proposition}
We have $\mathfrak{a}=\mathfrak{k}_0=0$.
\end{proposition}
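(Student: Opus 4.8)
The plan is to read off the structure of $\mathfrak{g}$ from the model $\mathfrak{g}\cong\widetilde{\mathfrak{g}}$ and the grading $\mathfrak{g}=\mathfrak{X}_0\oplus\mathfrak{X}_{1/2}\oplus\mathfrak{X}_1$ of Section~\ref{sec:Siegel}, under which $\mathfrak{X}_1=\{\iota(0,0,0,U)\}\cong\mathrm{Sym}_\nu(\mathbb{R})$, $\mathfrak{X}_{1/2}=\{\iota(0,0,V,0)\}\cong\mathrm{Mat}_{\nu,r}(\mathbb{C})$ and $\mathfrak{X}_0=\{\iota(T,K,0,0)\}\cong\mathfrak{gl}(\nu,\mathbb{R})\oplus\mathfrak{u}(r)$; I set $\mathfrak{l}:=\mathfrak{sl}(\nu,\mathbb{R})\oplus\mathfrak{su}(r)\subset\mathfrak{X}_0$, a Levi subalgebra of $\mathfrak{g}$, and let $c_r$ be a nonzero element of the center of $\mathfrak{u}(r)$. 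Direct computation with the matrices $\iota(\cdot)$ gives the following. (a) Up to a nonzero scalar the bracket $\mathfrak{X}_{1/2}\times\mathfrak{X}_{1/2}\to\mathfrak{X}_1$ is $(B,B')\mapsto\mathrm{Im}\,Q(B,B')$; it is surjective, and $Q(B,B)=\tfrac14(B\,\overline{{}^tB}+\overline{B}\,{}^tB)$ is a positive semidefinite symmetric real matrix vanishing only for $B=0$, so, as $Q$ is conjugate-linear in its second variable, the bracket $[B,\sqrt{-1}\,B]$ equals $-Q(B,B)$ up to that scalar and its vanishing forces $B=0$. (b) $\mathfrak{X}_1\cong S^2(\mathbb{R}^\nu)$ is an irreducible $\mathfrak{X}_0$-module. (c) $\mathrm{ad}(I_\nu)$ acts by a nonzero scalar on each of $\mathfrak{X}_{1/2}$ and $\mathfrak{X}_1$, while $\mathrm{ad}(c_r)$ is zero on $\mathfrak{X}_1$ and is, up to a nonzero scalar, a complex structure $J$ on $\mathfrak{X}_{1/2}$; hence $\mathrm{ad}(\alpha I_\nu+\beta c_r)$ is invertible on $\mathfrak{X}_{1/2}$ for every $(\alpha,\beta)\neq(0,0)$. (d) $\mathfrak{su}(r)$ acts on $\mathfrak{X}_{1/2}$ with no trivial constituent and trivially on $\mathfrak{X}_1$, so $\mathfrak{X}_{1/2}$ and $\mathfrak{X}_1$ share no irreducible constituent as $\mathfrak{l}$-modules.

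First I would localize $\mathfrak{a}$. Since $\mathfrak{a}$ is an ideal and $\partial\in\mathfrak{X}_0$, it is stable under the semisimple operator $\mathrm{ad}(\partial)$, so $\mathfrak{a}=(\mathfrak{a}\cap\mathfrak{X}_0)\oplus(\mathfrak{a}\cap\mathfrak{X}_{1/2})\oplus(\mathfrak{a}\cap\mathfrak{X}_1)$. Now $\mathfrak{a}\cap\mathfrak{X}_0$ is an abelian ideal of $\mathfrak{gl}(\nu,\mathbb{R})\oplus\mathfrak{u}(r)$, so projecting to the semisimple factors shows it lies in the center $\mathbb{R}I_\nu\oplus\mathbb{R}c_r$; if it were nonzero it would contain some $x=\alpha I_\nu+\beta c_r$ with $(\alpha,\beta)\neq(0,0)$, whence $\mathfrak{X}_{1/2}=[x,\mathfrak{X}_{1/2}]\subseteq\mathfrak{a}$ by (c) and then $0=[\mathfrak{X}_{1/2},\mathfrak{X}_{1/2}]=\mathfrak{X}_1$ by (a), which is absurd; thus $\mathfrak{a}\cap\mathfrak{X}_0=0$. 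And $\mathfrak{a}\cap\mathfrak{X}_{1/2}$ is $\mathfrak{X}_0$-invariant, hence $J$-stable by (c), and isotropic for the bracket of (a) because $\mathfrak{a}$ is abelian; for $B$ in it, $JB$ lies in it too, so $[B,JB]=0$, which by (a) forces $B=0$. Therefore $\mathfrak{a}\subseteq\mathfrak{X}_1$, and being an $\mathfrak{X}_0$-submodule of the irreducible module $\mathfrak{X}_1$, it is either $0$ or $\mathfrak{X}_1$.

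The crucial---and, I expect, only delicate---point is that $\mathfrak{a}\neq\mathfrak{X}_1$. If $\mathfrak{a}=\mathfrak{X}_1$, then $\mathfrak{g}=\mathfrak{X}_1\oplus\mathfrak{h}$ with $\mathfrak{h}$ a subalgebra, i.e.\ the sequence $0\to\mathfrak{X}_1\to\mathfrak{g}\to\mathfrak{g}/\mathfrak{X}_1\to0$ splits; I claim it cannot. As $\mathfrak{h}\cong\mathfrak{g}/\mathfrak{X}_1\cong\mathfrak{X}_0\ltimes\mathfrak{X}_{1/2}$ with $\mathfrak{X}_{1/2}$ abelian, a Levi subalgebra of $\mathfrak{h}$ is semisimple of the same dimension as $\mathfrak{l}$; by the Levi--Malcev theorem some $g\in\mathrm{Int}(\mathfrak{g})$ maps it onto $\mathfrak{l}$, and since $\mathrm{Int}(\mathfrak{g})$ fixes the ideal $\mathfrak{X}_1$ we may replace $\mathfrak{h}$ by $g\mathfrak{h}$ and assume $\mathfrak{l}\subseteq\mathfrak{h}$. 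Let $p\colon\mathfrak{g}\to\mathfrak{g}/\mathfrak{X}_1$ be the projection and $\sigma:=(p|_{\mathfrak{h}})^{-1}$ the induced Lie-algebra section; for $v\in\mathfrak{X}_{1/2}$ write $\sigma(p(v))=v+\psi(v)$ with $\psi(v)\in\mathfrak{X}_1$. For $A\in\mathfrak{l}\subseteq\mathfrak{h}$ one has $\sigma(p(A))=A$, so applying $\sigma$ to $p([A,v])$ and using $[\mathfrak{X}_1,\mathfrak{X}_{1/2}]=0$ (a weight count) yields $\psi([A,v])=[A,\psi(v)]$; thus $\psi\colon\mathfrak{X}_{1/2}\to\mathfrak{X}_1$ is $\mathfrak{l}$-equivariant and hence vanishes by (d). Then $\mathfrak{X}_{1/2}\subseteq\mathfrak{h}$, so $\mathfrak{X}_1=[\mathfrak{X}_{1/2},\mathfrak{X}_{1/2}]\subseteq\mathfrak{h}$ by (a), contradicting $\mathfrak{h}\cap\mathfrak{X}_1=0$. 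Hence $\mathfrak{a}=0$.

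Finally, $\mathfrak{a}=0$ gives $\mathfrak{h}=\mathfrak{g}$, so that $\mathfrak{k}_0$---the largest ideal of $\mathfrak{h}=\mathfrak{g}$ contained in $\mathfrak{k}$---vanishes because $(\mathfrak{g},\mathfrak{k},\rho,j)$ is effective; this yields $\mathfrak{a}=\mathfrak{k}_0=0$. Apart from the non-splitting argument of the third paragraph, everything is routine bookkeeping with the matrices $\iota(T,K,V,U)$ and the standard Siegel-domain bracket relations.
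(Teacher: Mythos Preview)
Your argument is correct and takes a genuinely different route from the paper's own proof. The paper proceeds via the K\"ahler--algebraic machinery set up in Section~\ref{sec:Algebraic}: it uses the preceding lemma that $\exp\mathrm{ad}(\mathfrak{u}\oplus\mathfrak{k}_0)$ is maximal compact in $\mathrm{Int}(\mathfrak{g})$, conjugates the explicit maximal compact $\{\iota(T,K,0,0)\mid T\in\mathfrak{o}(\nu),K\in\mathfrak{u}(r)\}$ onto $\mathfrak{u}\oplus\mathfrak{k}_0$, identifies $g^{-1}jr$ with $\iota(\tfrac12 I_\nu,0,0,0)$ by an eigenvalue count, and then compares dimensions with the Bergman $j$-algebra $(\mathfrak{g},\mathfrak{k}_\mathcal{D},j_\mathcal{D})$ to force $\mathfrak{a}\cap\mathfrak{g}_0=0$ and $\mathfrak{s}=\mathfrak{u}$; the vanishing of $\mathfrak{g}_{-1/2}$ then kills $\mathfrak{a}$, and $\mathfrak{k}_0$ is dispatched by observing that its conjugate lands in the kernel of ineffectiveness of $\mathfrak{k}_\mathcal{D}$.

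Your approach bypasses the $jr$-grading, the maximal-compact lemma, and the Bergman comparison entirely. You work instead with the $\partial$-grading $\mathfrak{g}=\mathfrak{X}_0\oplus\mathfrak{X}_{1/2}\oplus\mathfrak{X}_1$ from Section~\ref{sec:Siegel}, reduce $\mathfrak{a}$ to a submodule of the irreducible $\mathfrak{X}_0$-module $\mathfrak{X}_1$ by direct matrix computations (positivity of $Q$ and the complex structure coming from $\mathfrak{z}(\mathfrak{u}(r))$), and then rule out $\mathfrak{a}=\mathfrak{X}_1$ by a clean non-splitting argument using Levi--Malcev and the fact that $\mathfrak{X}_{1/2}$ and $\mathfrak{X}_1$ share no $\mathfrak{su}(r)$-constituent. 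The deduction $\mathfrak{k}_0=0$ from effectiveness once $\mathfrak{h}=\mathfrak{g}$ is immediate. Your route is more elementary and self-contained (it does not rely on the preceding lemma or on any K\"ahler-metric input), at the cost of not recovering the auxiliary identification $\mathfrak{s}=\mathfrak{u}$ that the paper's dimension count yields along the way.
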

\begin{proof}
Let $\mathfrak{u}'\subset\mathfrak{g}$ be a subalgebra such that $\exp\mathrm{ad}(\mathfrak{u}')\subset\mathrm{Int}(\mathfrak{g})$ is a maximal compact subgroup.
Then there exists $g\in\mathrm{Int}(\mathfrak{g})$ such that $g\mathop{\mathrm{ad}}(\mathfrak{u}')g^{-1}=\mathrm{ad}(\mathfrak{u}\oplus\mathfrak{k}_0)$.
Since $\mathfrak{z}(\mathfrak{g})=0$, we have $g\mathfrak{u}'=\mathfrak{u}\oplus\mathfrak{k}_0$ and hence $[g^{-1}jr,\mathfrak{u}']=0$.
Letting $\mathfrak{u}'\subset\mathfrak{g}$ be defined by $\{\iota(T,K,0,0)\mid T\in\mathfrak{o}(\nu), K\in\mathfrak{u}(r)\}$, we see that $g^{-1}jr$ is defined by $\iota(bI_\nu,iaI_\nu,0,cI_\nu)\in\widetilde{\mathfrak{g}}$ for some $a,b,c\in\mathbb{R}$.
We see that $\mathrm{ad}(g^{-1}jr)$ has eigenvalues $b\pm ia, 2b$ and it follows that $a=0$ and $b=1/2$.
Therefore for some $g\in \mathrm{Int}(\mathfrak{g})$, the element $g^{-1}jr$ corresponds to $\iota(1/2I_\nu,0,0,0)$.

Let us consider an effective $j$-algebra $(\mathfrak{g},\mathfrak{k}_\mathcal{D},j_\mathcal{D})$ associated with $\mathcal{D}$ equipped with the Bergman metric, and let $\mathfrak{r}_\mathcal{D}:=\{\iota(0,0,0,U)\mid U\in \mathrm{Sym}_\nu(\mathbb{R})\}$.
Then we have $\dim\mathfrak{r}=\dim\mathfrak{r}_\mathcal{D}$, $\dim\mathfrak{u}\oplus\mathfrak{k}_0=\dim\mathfrak{k}_\mathcal{D}$, $\dim(\mathfrak{a}\cap\mathfrak{g}_0)\oplus j\mathfrak{r}\oplus\mathfrak{s}\oplus\mathfrak{k}_0=\dim j\mathfrak{r}_\mathcal{D}\oplus\mathfrak{k}_\mathcal{D}$, from which we see that $\mathfrak{a}\cap\mathfrak{g}_0=0$ and $\mathfrak{s}=\mathfrak{u}$.
Also $\mathfrak{g}_{-1/2}=0$ implies that $\mathfrak{a}=0$.
Now for $\mathfrak{u}'=\mathfrak{k}_\mathcal{D}$, the equality $g\mathfrak{u}'=\mathfrak{u}\oplus\mathfrak{k}_0$ implies that $g^{-1}\mathfrak{k}_0$ is contained in the kernel of ineffectiveness, which shows that $\mathfrak{k}_0=0$.
\qed
\end{proof}

It is known \cite[\S 4]{VGP} that there exist $r_1,r_2,\cdots,r_\nu\in\mathfrak{r}$ and subspaces $\mathfrak{r}_{\alpha\beta}\subset\mathfrak{r}\,(1\leq \alpha\leq\beta\leq\nu)$ such that
\begin{equation*}
\mathfrak{r}=\bigoplus_{\alpha\leq \beta}\mathfrak{r}_{\alpha\beta},\quad \mathfrak{r}_{\alpha\alpha}=\mathbb{R}r_\alpha,\quad r=\sum_{\alpha=1}^\nu r_\alpha
\end{equation*}
and for any $x\in\mathfrak{r}_{\beta\gamma}$, we have $[jr_\alpha,x]=(\delta_{\alpha\beta}+\delta_{\alpha\gamma})x/2$, $[jr_\alpha,jx]=(\delta_{\alpha\beta}-\delta_{\alpha\gamma})jx/2$.
Moreover, after an essential change of $j$, we may assume that $\{\mathrm{ad}(jr_\alpha)\}_{1\leq \alpha\leq \nu}$ are commutative and real semisimple.
For $\Lambda=(\Lambda_1,\Lambda_2,\cdots,\Lambda_\nu)\in\mathbb{R}^\nu$, let $\mathfrak{g}^\Lambda:=\{x\in\mathfrak{g}\mid \mathrm{ad}(jr_\alpha)x=\Lambda_\alpha x\text{ for all }1\leq \alpha\leq \nu\}$, and for $1\leq \alpha\leq \nu$, let $\Lambda^{(\alpha)}\in\mathbb{R}^\nu$ be given by $\Lambda_\beta^{(\alpha)}=\delta_{\alpha\beta}$.
Put $\mathfrak{g}':=\mathfrak{r}+j\mathfrak{r}+\mathfrak{k}$.
It is also known that for any $\Lambda\in\mathbb{R}^\nu$, one has $j\mathfrak{g}^\Lambda\subset\mathfrak{g}^\Lambda+\mathfrak{g}'$.
The subalgebra $\mathfrak{u}$ contains ideals $\mathfrak{u}_1\simeq \mathfrak{o}(\nu)$ and $\mathfrak{u}_2\simeq\mathfrak{su}(r)$.
We have the following proposition.
\begin{proposition}
$\mathfrak{u_1}$ is contained in $\mathfrak{k}$.
\end{proposition}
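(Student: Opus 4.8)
The plan is to identify $\mathfrak{u}_1 \simeq \mathfrak{o}(\nu)$ inside the explicit model $\widetilde{\mathfrak{g}}$ and show that it acts trivially on the compact dual picture, forcing it into $\mathfrak{k}$. First I would recall from the previous proposition that, after conjugating by a suitable $g \in \mathrm{Int}(\mathfrak{g})$, we arranged $g^{-1}jr = \iota(\tfrac12 I_\nu,0,0,0)$ and $\mathfrak{s} = \mathfrak{u}$, with $\mathfrak{u}' := \{\iota(T,K,0,0) \mid T \in \mathfrak{o}(\nu),\ K \in \mathfrak{u}(r)\}$ a maximal compactly embedded subalgebra satisfying $g\mathfrak{u}' = \mathfrak{u}$ (since $\mathfrak{k}_0 = 0$). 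Thus, up to the conjugation, $\mathfrak{u}_1$ is realized as $\{\iota(T,0,0,0) \mid T \in \mathfrak{o}(\nu)\}$ and $\mathfrak{u}_2$ as $\{\iota(0,K,0,0) \mid K \in \mathfrak{su}(r)\}$, and I would work with these concrete forms.

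The key step is a dimension/weight count identifying $\mathfrak{k} = \mathfrak{k}_\mathcal{D}$ on the nose. Here $(\mathfrak{g}, \mathfrak{k}_\mathcal{D}, j_\mathcal{D})$ is the effective $j$-algebra attached to $\mathcal{D}$ with its Bergman metric, so $\mathfrak{k}_\mathcal{D}$ is the isotropy subalgebra at a point of $\mathcal{D}$; on the model it is conjugate to $\mathfrak{u}' = \mathfrak{o}(\nu) \oplus \mathfrak{u}(r)$ (embedded via $\iota$). From the proof of the preceding proposition we already know $\dim \mathfrak{u} = \dim \mathfrak{u}' = \dim \mathfrak{k}_\mathcal{D}$ and $\mathfrak{s} = \mathfrak{u}$; combining this with $\mathfrak{k}_0 = 0$ and with the general structure $\mathfrak{h}' \cap \mathfrak{k} = \mathfrak{z}_\mathfrak{u}(\mathfrak{c})$ from Borel's result, the point is to show that the abelian subalgebra $\mathfrak{c} \subset \mathfrak{u}$ centralizes $\mathfrak{u}_1$. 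Now $\mathfrak{u} \simeq \mathfrak{o}(\nu) \oplus \mathfrak{u}(r)$ has one-dimensional center $\mathfrak{z}(\mathfrak{u}) = \mathfrak{z}(\mathfrak{u}(r))$ (as $\nu \geq 3$ makes $\mathfrak{o}(\nu)$ semisimple and $\mathfrak{u}_1$ has no center), and $\mathfrak{c}$ contains $\mathfrak{z}(\mathfrak{u})$. Since $\mathfrak{u}_1 = \mathfrak{o}(\nu)$ is simple and commutes with all of $\mathfrak{u}_2 \oplus \mathfrak{z}(\mathfrak{u}(r))$, it commutes with $\mathfrak{c}$ (the abelianness of $\mathfrak{c}$ forces $\mathfrak{c} \cap \mathfrak{u}_1$ to be a Cartan subalgebra of $\mathfrak{o}(\nu)$ at most, but a Cartan of $\mathfrak{o}(\nu)$ does not centralize $\mathfrak{o}(\nu)$ when $\nu \geq 3$, so in fact $\mathfrak{c} \cap \mathfrak{u}_1 = 0$ if $\mathfrak{u}_1 \not\subset \mathfrak{k}$ — this is exactly the contradiction to chase down). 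Hence $\mathfrak{u}_1 \subset \mathfrak{z}_\mathfrak{u}(\mathfrak{c}) = \mathfrak{h}' \cap \mathfrak{k} \subset \mathfrak{k}$.

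I expect the main obstacle to be the bookkeeping that ties the abstract Kähler-algebra data ($\mathfrak{c}$, $\mathfrak{h}' \cap \mathfrak{k}$, the essential change of $j$ producing the $r_\alpha$ and the grading $\mathfrak{g}^\Lambda$) back to the rigid structure of the model $\widetilde{\mathfrak{g}}$ — in particular verifying that $\mathfrak{c}$ cannot meet $\mathfrak{u}_1$ nontrivially without $\mathfrak{u}_1$ already lying in $\mathfrak{k}$, given that $\mathfrak{k}$ is by construction $\mathfrak{z}_\mathfrak{u}(\mathfrak{c})$ (plus whatever lies outside $\mathfrak{u}$, which here is nothing new since $\mathfrak{k}_0 = \mathfrak{a} = 0$ and $\mathfrak{k} \subset \mathfrak{s} = \mathfrak{u}$ by the relations $\mathfrak{z}(\mathfrak{s}) \subset \mathfrak{h}' \cap \mathfrak{k}$ and $\mathfrak{h}' \cap \mathfrak{k} \subset \mathfrak{s}$). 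An alternative, possibly cleaner route would be to use $j\mathfrak{g}^\Lambda \subset \mathfrak{g}^\Lambda + \mathfrak{g}'$ together with the fact that $\mathfrak{u}_1$ lies in the zero-weight space $\mathfrak{g}^0$ and has trivial projection to $j\mathfrak{r}$ (as $\mathfrak{u}_1$ is semisimple and $j\mathfrak{r}$ abelian), deducing $j\mathfrak{u}_1 \subset \mathfrak{u}_1 + \mathfrak{k}$, whence $\mathfrak{u}_1$ is $j$-stable modulo $\mathfrak{k}$ and, being compact with no nonzero $j$-invariant subalgebra outside $\mathfrak{k}$ (by the standard fact that a $j$-subalgebra of a compact Lie algebra lies in $\mathfrak{k}$, cf. Borel's lemma already invoked), $\mathfrak{u}_1 \subset \mathfrak{k}$.
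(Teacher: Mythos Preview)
Your first approach has a genuine gap: the argument is purely Lie-algebraic and never uses the K\"ahler data $(j,\rho)$ in a substantive way. Nothing you wrote rules out the possibility that the abelian subalgebra $\mathfrak{c}\subset\mathfrak{u}\simeq\mathfrak{o}(\nu)\oplus\mathfrak{u}(r)$ projects nontrivially onto $\mathfrak{u}_1$; indeed, a Cartan subalgebra of $\mathfrak{o}(\nu)$ together with any torus in $\mathfrak{u}(r)$ is a perfectly good abelian subalgebra of $\mathfrak{u}$, and for such a $\mathfrak{c}$ one has $\mathfrak{u}_1\not\subset\mathfrak{z}_{\mathfrak{u}}(\mathfrak{c})$. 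Your sentence ``so in fact $\mathfrak{c}\cap\mathfrak{u}_1=0$ if $\mathfrak{u}_1\not\subset\mathfrak{k}$'' is the implication in the wrong direction: you have only observed $(\mathfrak{c}\cap\mathfrak{u}_1\neq 0)\Rightarrow(\mathfrak{u}_1\not\subset\mathfrak{k})$, which is no contradiction at all. To exclude this, one must invoke the complex structure and the form $\rho$, which you never do.

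Your alternative route also breaks down at its first step: the claim that $\mathfrak{u}_1$ lies in the zero-weight space $\mathfrak{g}^0$ for the family $\{\mathrm{ad}(jr_\alpha)\}$ is false. In the model the $jr_\alpha$ correspond (up to conjugation) to the diagonal idempotents in the $\mathfrak{gl}(\nu,\mathbb{R})$-block, and an off-diagonal element of $\mathfrak{o}(\nu)$ decomposes into the two nonzero weights $\pm(\Lambda^{(\beta)}-\Lambda^{(\alpha)})/2$. This is precisely what the paper exploits: for each pair $\alpha<\beta$ one writes the corresponding generator $u\in\mathfrak{u}_1$ as $u=y-x$ with $x\in\mathfrak{g}^{(\Lambda^{(\beta)}-\Lambda^{(\alpha)})/2}$ and $y\in\mathfrak{g}^{(\Lambda^{(\alpha)}-\Lambda^{(\beta)})/2}$, then applies $j\mathfrak{g}^\Lambda\subset\mathfrak{g}^\Lambda+\mathfrak{g}'$ to locate $ju$ and finally checks $\rho(ju,u)=0$, i.e.\ $u\in\mathfrak{k}$. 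The K\"ahler form $\rho$ is the missing ingredient in both of your sketches.
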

\begin{proof}
Let $1\leq \alpha<\beta\leq \nu$.
Then there exists $x\in\mathfrak{g}$ such that $\mathfrak{g}^{(\Lambda^{(\beta)}-\Lambda^{(\alpha)})/2}=\mathbb{R}x$.
Also there exists $u\in\mathfrak{u}_1$, and $y\in\mathfrak{g}^{(\Lambda^{(\alpha)}-\Lambda^{(\beta)})/2}$ such that $x+u=y$.
We have $ju=-jx+jy=\lambda x+z+jy\in \mathfrak{u}_1$ for some $\lambda\in\mathbb{R}$ and $z\in\mathfrak{g}'$.
Since $\lambda x+z+jy, \lambda x-\lambda y\in\mathfrak{u}_1$, we have $z+jy+\lambda y\in\mathfrak{u}\cap \mathfrak{g}'=\mathfrak{k}$.
From the equality $ju=-\lambda u+(z+jy+\lambda y)$ we see that $\rho(ju,u)=0$, and hence $u\in\mathfrak{k}$.
Since $1\leq \alpha<\beta\leq \nu$ are arbitrary, we conclude that $\mathfrak{u}_1\subset\mathfrak{k}$.
\qed
\end{proof}
Now we have $\mathfrak{k}=\mathfrak{u}_1+\mathfrak{z}_{\mathfrak{u}_2}(\mathfrak{c}_2)+\mathfrak{z}(\mathfrak{u})$ with $\mathfrak{c}_2\subset\mathfrak{u}_2$ abelian.
\section{Holomorphic vector bundles}
\label{sec:Holomorphic}
For a real vector space or a real Lie algebra $V$, we denote by $V_\mathbb{C}$ the complexification $V\otimes_\mathbb{R}\mathbb{C}$.
Define
\begin{eqnarray*}
\quad\mathfrak{g}_-:=\{x+ijx\in\mathfrak{g}_\mathbb{C}\mid x\in\mathfrak{g}\}+\mathfrak{k}_\mathbb{C},\quad \mathfrak{g}_-^*:=\mathfrak{g}_-+\mathfrak{u}_\mathbb{C},\\
\mathfrak{u}_-:=\{x+ijx\in\mathfrak{u}_\mathbb{C}\mid x\in\mathfrak{u}\}+\mathfrak{k}_\mathbb{C}.
\end{eqnarray*}
Let $U:=\exp\mathfrak{u}$ be the corresponding subgroup of $G$.
The rigidity of invariant complex structures of a K\"{a}hler solvmanifold studied in \cite{Dotti} implies that there is a biholomorphism from $G/U$ into $\mathcal{D}$.
Therefore, we only need to consider the action of $G$ on $\mathcal{D}$ given by some automorphism $\varphi$ of $\mathrm{Aut}_{hol}(\mathcal{D})^o$ as follows: $g.z:=\varphi(p(g))z\,(g\in G, z\in\mathcal{D})$, where $p:G\rightarrow\mathrm{Aut}_{hol}(\mathcal{D})^o$ is the covering.
Let $\varphi_0$ be an automorphism of $\widetilde{\mathfrak{g}}$ given by $\varphi_0(\iota(T,K,V,U)):=\iota(T,\overline{K},\overline{V},-U)$.
\begin{proposition}
Any automorphism $\varphi$ of $\widetilde{\mathfrak{g}}$ is written by $\varphi=g\varphi_0^{\varepsilon}$ for some $g\in\mathrm{Int}(\widetilde{\mathfrak{g}})$ and $\varepsilon\in\{0,1\}$.
\end{proposition}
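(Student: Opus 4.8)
The plan is to reduce the computation of $\mathrm{Aut}(\widetilde{\mathfrak{g}})$ to a combination of inner automorphisms and a single outer involution $\varphi_0$, exploiting the decomposition of $\widetilde{\mathfrak{g}}$ into its radical and a Levi subalgebra. First I would observe that $\widetilde{\mathfrak{g}}$ has a Levi decomposition $\widetilde{\mathfrak{g}} = \mathfrak{l} \ltimes \mathfrak{n}$, where the semisimple part $\mathfrak{l}$ is (essentially) $\mathfrak{sl}(\nu,\mathbb{R}) \oplus \mathfrak{u}(r)$, read off from the block form of $\iota(T,K,V,U)$ (the $T$-block gives $\mathfrak{gl}(\nu,\mathbb{R})$ whose semisimple part is $\mathfrak{sl}(\nu,\mathbb{R})$, and the $K$-block gives $\mathfrak{u}(r)$). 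Since any automorphism $\varphi$ of $\widetilde{\mathfrak{g}}$ preserves the radical $\widetilde{\mathfrak{r}}$ and hence induces an automorphism of the semisimple quotient $\widetilde{\mathfrak{g}}/\widetilde{\mathfrak{r}} \simeq \mathfrak{l}_{ss}$, and by Malcev's theorem all Levi subalgebras are conjugate under $\mathrm{Int}(\widetilde{\mathfrak{g}})$, after composing with an inner automorphism we may assume $\varphi(\mathfrak{l}) = \mathfrak{l}$.

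Next I would analyze $\varphi|_{\mathfrak{l}}$. Composing further with inner automorphisms of $\mathfrak{l}$ (which extend to $\mathrm{Int}(\widetilde{\mathfrak{g}})$), the only freedom left is an outer automorphism of $\mathfrak{sl}(\nu,\mathbb{R}) \oplus \mathfrak{u}(r)$. For $\nu \geq 3$ the outer automorphism group of $\mathfrak{sl}(\nu,\mathbb{R})$ is generated by $T \mapsto -{}^tT$; for $\mathfrak{u}(r)$ with $r \geq 2$ (and $\mathfrak{su}(r)$ inside it) the outer automorphism is complex conjugation $K \mapsto \overline{K}$. I would show that the "negative transpose" on the $\mathfrak{sl}(\nu)$-factor cannot occur in isolation: it would be incompatible with the existence of the positive grading element $\partial$ (equivalently with the fact that $\widetilde{\mathfrak{g}}$ admits no automorphism swapping $\mathfrak{X}_0$-weights in the wrong direction), since $-{}^tT$ reverses the sign of the action on $\mathfrak{n}$-pieces like $\mathrm{Sym}_\nu(\mathbb{R})$, and there is no complementary negatively graded part (recall $\mathfrak{X}_{-1} = \mathfrak{X}_{-1/2} = 0$). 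Hence, up to inner automorphisms, $\varphi|_{\mathfrak{l}}$ is either the identity or the conjugation $K \mapsto \overline{K}$ on the $\mathfrak{u}(r)$-factor — exactly what $\varphi_0$ does on $\mathfrak{l}$.

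Finally, having arranged (after multiplying by $g \in \mathrm{Int}(\widetilde{\mathfrak{g}})$ and possibly $\varphi_0$) that $\varphi$ is the identity on $\mathfrak{l}$, I would determine $\varphi$ on the radical $\mathfrak{n}$, which as an $\mathfrak{l}$-module decomposes into the pieces carrying $V \in \mathrm{Mat}_{\nu,r}(\mathbb{C})$ and $U \in \mathrm{Sym}_\nu(\mathbb{R})$ (and the one-dimensional center $\mathbb{R}\partial$ coming from the trace part of $T$, but $\partial$ is itself determined by the $\mathbb{Z}$-grading). Each of these is an irreducible (or multiplicity-one) $\mathfrak{l}$-module, so by Schur's lemma $\varphi$ acts on each by a scalar; the Jacobi identity linking the brackets $[\text{$V$-part}, \text{$V$-part}] \subset \text{$U$-part}$ and the $\mathfrak{l}$-equivariance then rigidly pin down these scalars, forcing $\varphi$ to be inner (an element of $\exp\mathrm{ad}(\mathbb{R}\partial \oplus \cdots)$, i.e. absorbed into $g$). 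The main obstacle I anticipate is the careful bookkeeping in this last step: verifying that the scalars on the $V$- and $U$-modules are not free parameters but are fixed by the bracket relations (so that no extra "diagonal" outer automorphisms sneak in), and confirming that the sign change $U \mapsto -U$ in $\varphi_0$ is indeed realized by an \emph{outer} class and is not already inner — this requires checking that no element of $\mathrm{Int}(\widetilde{\mathfrak{g}})$ acts as $-1$ on $\mathrm{Sym}_\nu(\mathbb{R})$ while fixing $\mathfrak{l}$ pointwise, which follows from the grading since such an element would have to be $\exp\mathrm{ad}(t\partial)$ acting by $e^{2t}$, never $-1$.
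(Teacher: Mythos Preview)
Your plan is essentially the paper's own argument: conjugate the Levi part $\mathfrak{l}$ back to itself, reduce $\varphi|_{\mathfrak{l}}$ to the outer automorphism classes $\sigma_1,\sigma_2$ of $\mathfrak{sl}(\nu,\mathbb{R})$ and $\mathfrak{su}(r)$, pin down the grading element $\iota(I_\nu,0,0,0)$ using $\mathfrak{X}_{-1}=\mathfrak{X}_{-1/2}=0$, and then use Schur's lemma on the irreducible $\mathfrak{l}$-modules in the radical to obtain scalars absorbed into $\mathrm{Int}(\widetilde{\mathfrak{g}})$.

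One step needs correction. Your stated reason for excluding $\sigma_1\colon T\mapsto -{}^tT$ on the $\mathfrak{sl}(\nu)$-factor is that it would reverse the grading and collide with $\mathfrak{X}_{-1}=0$; but $\sigma_1$ lives on $\mathfrak{sl}(\nu)$ and does nothing to the central element $I_\nu$ that defines the grading (indeed the eigenvalue pattern $\{0,1/2,1\}$ of $\mathrm{ad}\,\iota(I_\nu,0,0,0)$ forces $\varphi(\iota(I_\nu,0,0,0))=\iota(I_\nu,0,0,0)$ independently of $\varepsilon_1$), so the graded pieces $\widetilde{\mathfrak{g}}_\lambda$ are preserved either way. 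What actually kills $\varepsilon_1=1$ is the module structure of $\widetilde{\mathfrak{g}}_1\simeq\mathrm{Sym}_\nu(\mathbb{R})\simeq S^2(\mathbb{R}^\nu)$: an automorphism with $\varphi|_{\mathfrak{sl}(\nu)}$ in the class of $\sigma_1$ would force $\varphi|_{\widetilde{\mathfrak{g}}_1}$ to intertwine $S^2(\mathbb{R}^\nu)$ with its $\sigma_1$-twist $S^2((\mathbb{R}^\nu)^*)$, and these are non-isomorphic $\mathfrak{sl}(\nu,\mathbb{R})$-modules for $\nu\geq 3$. (The paper is equally terse here, writing only ``This leads to\ldots'', but this is the computation behind that phrase.) With this fix your Schur-lemma bookkeeping on the $V$- and $U$-blocks goes through exactly as you outline.
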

\begin{proof}
Let $\mathfrak{l}:=\{\iota(T,K,0,0)\mid T\in\mathfrak{gl}(\nu,\mathbb{R}), K\in\mathfrak{u}(r)\}$.
Then there exists $g\in\mathrm{Int}(\widetilde{g})$ such that $g\varphi(\mathfrak{l})\subset\mathfrak{l}$.
It is known that the orders of the outer automorphism groups of $\mathfrak{sl}(\nu,\mathbb{R})$ and $\mathfrak{su}(r)$ are $2$ (see \cite{Murakami}).
Therefore if we denote by $\sigma_1$ and $\sigma_2$ the automorphisms of $\mathfrak{sl}(\nu,\mathbb{R})$ and $\mathfrak{su}(r)$ given by the negative transposes, we can further assume that $g\varphi(\iota(T,K,0,0))=\iota(\sigma_1^{\varepsilon_1}(T),\sigma_2^{\varepsilon_2}(K),0,0)\,(T\in\mathfrak{sl}(\nu,\mathbb{R}),K\in\mathfrak{su}(r))$ with $\varepsilon_1,\varepsilon_2\in\{0,1\}$.
Moreover, we see that $\varphi(\iota(I_\nu,0,0,0))=\iota(I_\nu,0,0,0)$, $\varphi(\widetilde{\mathfrak{g}}_\lambda)=\widetilde{\mathfrak{g}}_\lambda\,(\lambda=0,1/2,1)$, where we put $\widetilde{\mathfrak{g}}_\lambda:=\{X\in\widetilde{\mathfrak{g}}\mid\mathrm{ad}(\iota(I_\nu,0,0,0))X=\lambda X\}$.
This leads to $g\varphi(\iota(T,K,V,U))=\iota(T,K,\lambda V,\lambda^2 U)$ or $\iota(T,\sigma_2(K),\lambda \overline{V},-\lambda^2 U)$ for some $\lambda\in\mathbb{R}$, which shows the assertion.
\end{proof}
The automorphism $\varphi_0$ lifts to an automorphism of $G$, which induces a biholomorphism from $\mathcal{D}$ to the conjugate manifold $\overline{\mathcal{D}}$.
Take a reference point $(iI_\nu,0)\in\mathcal{D}$.
Now we may assume that $\mathfrak{u}$ and $\mathfrak{g}_-^*$ are defined by $\widetilde{\mathfrak{u}}:=\{\iota(T,K,0,0)\mid T\in\mathfrak{o}(\nu),K\in \mathfrak{u}(r)\}$ and
\begin{equation*}
\widetilde{\mathfrak{g}_-^*}:=\left\{\begin{array}{c}\iota(T_1,K_1,V,T_2+{}^tT_2)\\+i\iota(T_2,K_2,i\overline{V},-T_1-{}^tT_1)\in\widetilde{\mathfrak{g}}_\mathbb{C}\end{array}\mid\begin{array}{c} T_1,T_2\in\mathfrak{gl}(\nu,\mathbb{R}), K_1, \\K_2\in\mathfrak{u}(r), V\in\mathrm{Mat}_{\nu,r}(\mathbb{C})\end{array}\right\}
\end{equation*}
or its complex conjugate, respectively.

Let $G_\mathbb{C}$ be a connected and simply connected Lie group with $\mathrm{Lie}(G_\mathbb{C})=\mathfrak{g}_\mathbb{C}$, and $G_-:=\exp\mathfrak{g}_-$, $G_-^*:=\exp\mathfrak{g}_-^*$, $U_-:=\exp\mathfrak{u}_-$, $U_\mathbb{C}:=\exp\mathfrak{u}_\mathbb{C}$ the corresponding subgroups of $G_\mathbb{C}$.
By \cite[\S 7]{DN}, the natural homomorphism from $G$ into $G_\mathbb{C}$ induces the holomorphic embeddings
\begin{equation*}
G/K\hookrightarrow G_\mathbb{C}/G_-,\quad G/U\hookrightarrow G_\mathbb{C}/G_-^*.
\end{equation*}
Any one-dimensional representation of $K$ is given by $(\chi,\mathbb{C}^\chi):=(1\boxtimes\chi_1\boxtimes\chi_2,\mathbb{C}\otimes\mathbb{C}^{\chi_1}\otimes\mathbb{C}^{\chi_2})$, where $(1,\mathbb{C})$ is the trivial representation of $\exp \mathfrak{u}_1$, and $(\chi_1,\mathbb{C}^{\chi_1})$ and $(\chi_2, \mathbb{C}^{\chi_2})$ are one-dimensional representations of $\exp \mathfrak{z}_{u_2}(\mathfrak{c_2})$ and $\exp\mathfrak{z}(\mathfrak{u})$, respectively.
We may assume that the conjugate representation $\overline{\pi}$ is realized in $\Gamma^{hol}(G/K,G\times_K\mathbb{C}^\chi)$, where $G\times_K\mathbb{C}^\chi$ is equipped with a structure of a $G$-equivariant holomorphic line bundle.
We have an extension $\chi:G_-\rightarrow \mathbb{C}^\times$ and an isomorphism $G\times_K\mathbb{C}^\chi\simeq GG_-\times_{G_-}\mathbb{C}^\chi$.
The group $G_-^*$ acts by the left regular representation on the space
\begin{equation*}
\widehat{\mathbb{C}^\chi}:=\left\{f:G_-^*\rightarrow \mathbb{C}\mid\begin{array}{c} f\text{ is holomorphic and }f(gh)=\chi^{-1}(h)f(g)\\\text{ for all }g\in G_-^*\text{ and }h\in G_-\end{array}\right\}.
\end{equation*}
The restriction map induces an injection
\begin{equation*}\widehat{\mathbb{C}^\chi}\subset \left\{f:U_\mathbb{C}\rightarrow\mathbb{C}\mid\begin{array}{c} f\text{ is holomorphic and }\\f(gh)=\chi^{-1}(h)f(g)\text{ for all }g\in U_\mathbb{C}\text{ and }h\in U_-\end{array}\right\},
\end{equation*}
and hence $\widehat{\mathbb{C}^\chi}$ is finite-dimensional.
Note that $GG_-^*=GG_-$ and the correspondence $\Gamma^{hol}(GG_-\times_{G_-}\mathbb{C}^\chi)\ni f\mapsto F\in\Gamma^{hol}(GG_-^*\times_{G_-^*}\widehat{\mathbb{C}^\chi})$ given by $F(g)(h):=f(gh)\,(g\in GG_-^*, h\in G_-^*)$ is an injective $G$-intertwining map.
Thus $\widehat{\mathbb{C}^\chi}$ is an irreducible $\exp \mathfrak{u}_2\times\exp\mathfrak{z}(\mathfrak{u})$-module and we may assume that $\overline{\pi}$ is realized in $\Gamma^{hol}(G\times_U\widehat{\mathbb{C}^\chi})$.
Let $\widetilde{\mathfrak{l}_1}:=\{\iota(T,0,0,0)\mid T\in\mathfrak{gl}(\nu,\mathbb{R})\}$, $\widetilde{\mathfrak{u}_2}:=\{\iota(0,K,0,0)\mid K\in\mathfrak{su}(r)\}$.
\begin{proposition}
If a finite-dimensional complex representation $\theta$ of $\widetilde{\mathfrak{g}_-^*}$ is irreducible when restricted to $\widetilde{\mathfrak{u}}$, then there exists some irreducible finite-dimensional representations $(\theta_1,V^{\theta_1})$ of $\mathfrak{gl}(\nu,\mathbb{C})$ and $(\theta_2, V^{\theta_2})$ of $\mathfrak{sl}(r,\mathbb{C})$, and $c_2\in\mathbb{C}$ such that
\begin{eqnarray*}
\theta(\iota(T_1,K_1,V,T_2+{}^tT_2)+i\iota(T_2,K_2,i\overline{V},-T_1-{}^t T_1))\\=\theta_1(T_1+iT_2)\otimes \mathrm{Id}_{V^{\theta_2}}+\mathrm{Id}_{V^{\theta_1}}\otimes \widetilde{\theta_2}(K_1+iK_2)\\+c_2\mathrm{tr}(K_1+iK_2)\mathrm{Id}_{V^{\theta_1}\otimes V^{\theta_2}}\in \mathfrak{gl}(V^{\theta_1}\otimes V^{\theta_2}),
\end{eqnarray*}
where $\widetilde{\theta_2}$ is the composition of the natural projection of $\mathfrak{gl}(r,\mathbb{C})$ onto $\mathfrak{sl}(r,\mathbb{C})$ with $\theta_2$.
\end{proposition}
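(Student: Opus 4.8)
The plan is to show that $\theta$ annihilates the nilradical of $\widetilde{\mathfrak{g}_-^*}$, hence factors through the reductive quotient $\mathfrak{gl}(\nu,\mathbb{C})\oplus\mathfrak{gl}(r,\mathbb{C})$, where it decomposes as an outer tensor product; the asserted formula is then just the coordinate expression of this decomposition. First I would record the structure of $\widetilde{\mathfrak{g}_-^*}$ from its defining formula. Put $\mathfrak{n}:=\{\iota(0,0,V,0)+i\iota(0,0,i\overline{V},0)\mid V\in\mathrm{Mat}_{\nu,r}(\mathbb{C})\}$, the locus $T_1=T_2=K_1=K_2=0$. Since the span of the $V$- and $U$-blocks is an ideal of $\widetilde{\mathfrak{g}}_\mathbb{C}$, the projection of $\widetilde{\mathfrak{g}}_\mathbb{C}$ onto its $(T,K)$-block $\widetilde{\mathfrak{g}}_{0,\mathbb{C}}\cong\mathfrak{gl}(\nu,\mathbb{C})\oplus\mathfrak{gl}(r,\mathbb{C})$ is a Lie homomorphism; restricted to $\widetilde{\mathfrak{g}_-^*}$ it is surjective (the parameters $T_1,T_2,K_1,K_2$ are free) with kernel $\mathfrak{n}$, and it carries $X:=\iota(T_1,K_1,V,T_2+{}^tT_2)+i\iota(T_2,K_2,i\overline{V},-T_1-{}^tT_1)$ to $(T_1+iT_2,K_1+iK_2)$. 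Moreover $\mathfrak{n}$ lies in the $V$-block, so $[\mathfrak{n},\mathfrak{n}]$ lies in the $U$-block, while one checks directly that $\widetilde{\mathfrak{g}_-^*}$ meets the $U$-block only in $0$; hence $\mathfrak{n}$ is an abelian ideal and $\widetilde{\mathfrak{g}_-^*}/\mathfrak{n}\cong\mathfrak{gl}(\nu,\mathbb{C})\oplus\mathfrak{gl}(r,\mathbb{C})$. Consequently $\mathfrak{n}$ is the nilradical and $\mathrm{rad}(\widetilde{\mathfrak{g}_-^*})$ is the preimage of the centre $\mathbb{C}I_\nu\oplus\mathbb{C}I_r$. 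Finally, since $\widetilde{\mathfrak{u}}\subset\widetilde{\mathfrak{g}_-^*}$ and $\theta|_{\widetilde{\mathfrak{u}}}$ is irreducible, $\theta$ is an irreducible representation of $\widetilde{\mathfrak{g}_-^*}$.

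Next I would prove $\theta(\mathfrak{n})=0$. By Lie's theorem the solvable ideal $\mathrm{rad}(\widetilde{\mathfrak{g}_-^*})$ acts on the irreducible module of $\theta$ by a character; in particular $\theta(Z)$ is a scalar operator for every $Z\in\mathfrak{n}$. On the other hand, via the identification $[\iota(T,0,0,0),\cdot]\leftrightarrow(V\mapsto TV)$ the adjoint action of the $\mathfrak{gl}(\nu,\mathbb{C})$-factor on $\mathfrak{n}$ is isomorphic to $r$ copies of the standard module of $\mathfrak{gl}(\nu,\mathbb{C})$; restricted to $\mathfrak{sl}(\nu,\mathbb{C})$ this has no trivial constituent for $\nu\geq2$, so $[\mathfrak{sl}(\nu,\mathbb{C}),\mathfrak{n}]=\mathfrak{n}$. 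Writing an arbitrary $Z\in\mathfrak{n}$ as a sum of brackets $[Y,Z']$ with $Y\in\mathfrak{sl}(\nu,\mathbb{C})$, $Z'\in\mathfrak{n}$, we get $\theta(Z)=\sum[\theta(Y),\theta(Z')]=0$, since each $\theta(Z')$ is scalar. Thus $\theta$ descends to an irreducible representation of $\widetilde{\mathfrak{g}_-^*}/\mathfrak{n}\cong\mathfrak{gl}(\nu,\mathbb{C})\oplus\mathfrak{gl}(r,\mathbb{C})$.

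It remains to decompose this representation and translate back to coordinates. An irreducible finite-dimensional complex representation of a direct sum $\mathfrak{a}\oplus\mathfrak{b}$ of Lie algebras is an outer tensor product: the socle of $\theta$ over $\mathfrak{a}$ is $\mathfrak{b}$-stable, hence all of $V^\theta$, so $\theta$ is $\mathfrak{a}$-semisimple; being a single $\mathfrak{a}$-isotypic component it has the form $V^{\theta_1}\otimes\mathrm{Hom}_\mathfrak{a}(V^{\theta_1},V^\theta)$ with $V^{\theta_1}$ the irreducible $\mathfrak{a}=\mathfrak{gl}(\nu,\mathbb{C})$-type, and by Burnside the commutant of $\mathfrak{gl}(\nu,\mathbb{C})$ forces $\mathfrak{b}=\mathfrak{gl}(r,\mathbb{C})$ to act through an irreducible representation $\psi$ on the second factor $V^{\theta_2}:=\mathrm{Hom}_{\mathfrak{gl}(\nu,\mathbb{C})}(V^{\theta_1},V^\theta)$. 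Splitting $\mathfrak{gl}(r,\mathbb{C})=\mathfrak{sl}(r,\mathbb{C})\oplus\mathbb{C}I_r$, Schur's lemma gives $\psi(I_r)=rc_2\,\mathrm{Id}$ for some $c_2\in\mathbb{C}$, whence $\psi(K)=\theta_2\!\left(K-\tfrac{\mathrm{tr}K}{r}I_r\right)+c_2\,\mathrm{tr}(K)\,\mathrm{Id}=\widetilde{\theta_2}(K)+c_2\,\mathrm{tr}(K)\,\mathrm{Id}$, with $\theta_2:=\psi|_{\mathfrak{sl}(r,\mathbb{C})}$ irreducible. Substituting the coordinate expression $X\mapsto(T_1+iT_2,K_1+iK_2)$ from the first step yields exactly the stated identity. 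The step I expect to be the real work is the first one — pinning down the Lie-algebraic structure of $\widetilde{\mathfrak{g}_-^*}$, in particular that its nilradical is precisely the $V$-block and is abelian, and that the semisimple part of its Levi quotient acts on this block without trivial constituents — after which Lie's theorem, the outer tensor decomposition, and Schur's lemma are routine.
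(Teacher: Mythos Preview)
Your proposal is correct and follows essentially the same route as the paper. Both arguments amount to: (i) recognize the Levi-type structure of $\widetilde{\mathfrak{g}_-^*}$ with reductive quotient $\mathfrak{gl}(\nu,\mathbb{C})\oplus\mathfrak{gl}(r,\mathbb{C})$ and abelian nilradical $\mathfrak{n}$ given by the $V$-block; (ii) show $\theta(\mathfrak{n})=0$; (iii) decompose the resulting representation of the direct sum as an outer tensor product and peel off the central character. The only difference is in step~(ii): the paper observes directly via Schur's lemma (for the irreducible $\widetilde{\mathfrak{u}}$-module) that the grading element $\iota(I_\nu,0,0,0)$ acts by a scalar, and since it brackets with each $V$-block element to a nonzero multiple of itself, $\theta$ must annihilate $\mathfrak{n}$; you instead invoke Lie's theorem to make the whole radical act by a character and then use $[\mathfrak{sl}(\nu,\mathbb{C}),\mathfrak{n}]=\mathfrak{n}$. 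These are minor variants of the same commutator-with-a-scalar trick, and your version spells out the tensor-product decomposition more explicitly than the paper does.
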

\begin{proof}
We have a Levi decomposition of $\widetilde{\mathfrak{g}_-^*}$ if we define two complex subalgebras by
$V=0,\tr T_1=\tr T_2=\tr K_1=\tr K_2=0$ and by $T_1, T_2\in\mathbb{R}I_\nu, K_1, K_2\in\mathbb{R}I_r$.
By Schur's lemma, we see that $\theta(\iota(I_\nu,0,0,0))$ and $\theta(\iota(0,iI_r,0,0)$ are scalar multiples of the identity.
This implies that for $V\in\mathrm{Mat}_{\nu,r}(\mathbb{C})$, we have $\theta(\iota(0,0,V,0))=0$, which shows the assertion.
\qed
\end{proof}

Let $(\Theta_1,V^{\Theta_1})$ and $(\Theta_2,V^{\Theta_2})$ be holomorphic representations of the universal covering group of $GL(\nu,\mathbb{C})$ and $SL(r,\mathbb{C})$, respectively, and $p_1:G\rightarrow\exp(\widetilde{\mathfrak{l}_1})$, $p_{1,1}:G\rightarrow \exp(\mathfrak{z}(\widetilde{\mathfrak{l}_1}))\simeq\mathbb{R}$, $p_2:G\rightarrow\exp(\widetilde{\mathfrak{u}_2})\simeq SU(r)$, $p_{2,1}:G\rightarrow\exp(\mathfrak{z}(\widetilde{\mathfrak{u}_2}))\simeq \mathbb{R}$ the natural projections.
For linear forms $c_1\in(\mathfrak{z}(\widetilde{\mathfrak{l}_1})^*)_\mathbb{C}$, $c_2\in(\mathfrak{z}(\widetilde{\mathfrak{u}_2})^*)_\mathbb{C}$, let $(\exp x)^{c_1}:=e^{\langle c_1,x\rangle}\,(x\in\mathfrak{z}(\widetilde{\mathfrak{l}_1}))$, $(\exp x)^{c_2}:=e^{\langle c_2,x\rangle}\,(x\in\mathfrak{z}(\widetilde{\mathfrak{u}_2}))$, and for $g\in G$,
\begin{equation*}
m_{\Theta_1,c_2,\Theta_2}(g):=(p_{2,1}(g))^{c_2}\Theta_1(p_1(g))\otimes\Theta_2(p_2(g))\in GL(V^{\Theta_1}\otimes V^{\Theta_2}).
\end{equation*}
Define the action of $G$ on $\mathcal{D}\times V^{\Theta_1}\otimes V^{\Theta_2}$ by
\begin{equation*}
G\times\mathcal{D}\times V^{\Theta_1}\otimes V^{\Theta_2}\ni(g,z,v)\mapsto(p(g)z,m_{\Theta_1,c_2,\Theta_2}(g)v)\in\mathcal{D}\times V^{\Theta_1}\otimes V^{\Theta_2}.
\end{equation*}
Then we get a $G$-equivariant holomorphic vector bundle.
Conversely, every $G$-equivariant holomorphic vector bundle over $\mathcal{D}$ is isomorphic to one obtained in this way.
Indeed, by Tirao-Wolf \cite{TW}, for a $G$-invariant complex structure $j_0$ on $G/U$, and a finite-dimensional representation $(\tau, V^\tau)$ of $U$, the isomorphism classes of $G$-equivariant holomorphic vector bundles on $G\times_UV^\tau$ stand in one-one correspondence with extensions of the differential $d\tau$ of $\tau$ to complex representations of a certain complex subalgebra of $\mathfrak{g}_\mathbb{C}$ defined by the values of anti-holomorphic vector fields at $U\in G/U$.
Since $\mathfrak{u}_1\subset \mathfrak{k}$, we may assume that $\Theta_1$ is one-dimensional and hence $\Theta_1(p_1(g))=(p_{1,1}(g))^{c_1}\,(g\in G)$ for some $c_1\in(\mathfrak{z}(\widetilde{\mathfrak{l}_1})^*)_\mathbb{C}$.
\section{Equivalence classes of CS representations}
\label{sec:Equivalence}
We have proved the following theorem.
\begin{theorem}
For some $c_1\in(\mathfrak{z}(\widetilde{\mathfrak{l}_1})^*)_\mathbb{C}$, $c_2\in(\mathfrak{z}(\widetilde{\mathfrak{u}_2})^*)_\mathbb{C}$, and an irreducible unitary representation $(\Theta_2,V^{\Theta_2})$ of $SU(r)$, the representation $\pi$ or $\overline{\pi}$ is equivalent to a unitarization of the following representation:
\begin{eqnarray*}
\pi_{c_1,c_2,\Theta_2}(g)f(z):=(p_{1,1}(g))^{c_1}(p_{2,1}(g))^{c_2}\Theta_2(p_2(g))f(p(g^{-1})z)\\(g\in G,f\in\Gamma^{hol}(\mathcal{D},\mathcal{D}\times V^{\Theta_2}),z\in\mathcal{D}).
\end{eqnarray*}
\end{theorem}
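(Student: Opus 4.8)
The plan is to assemble the structural results of Sections \ref{sec:Realization}--\ref{sec:Holomorphic}. By Section \ref{sec:Realization} the conjugate representation $\overline{\pi}$ is realized in $\Gamma^{hol}(G/K,G\times_K\mathbb{C}^\chi)$, and the discussion preceding the theorem shows that $\overline{\pi}$ may be realized in $\Gamma^{hol}(G\times_U\widehat{\mathbb{C}^\chi})$, where $\widehat{\mathbb{C}^\chi}$ is a finite-dimensional irreducible $\exp\mathfrak{u}_2\times\exp\mathfrak{z}(\mathfrak{u})$-module. First I would invoke the rigidity of invariant complex structures due to Dotti \cite{Dotti} to identify $G/U$ biholomorphically with $\mathcal{D}$, so that $G\times_U\widehat{\mathbb{C}^\chi}$ becomes a $G$-equivariant holomorphic vector bundle over $\mathcal{D}$ on which $G$ acts through $\varphi\circ p$ for some automorphism $\varphi$ of $\mathrm{Aut}_{hol}(\mathcal{D})^o$.

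Next I would normalize $\varphi$. By the proposition on the automorphisms of $\widetilde{\mathfrak{g}}$, modulo an inner automorphism --- which may be absorbed into the identification $G/U\simeq\mathcal{D}$ --- we may assume that $\varphi$ induces $\varphi_0^{\varepsilon}$ on the Lie algebra for some $\varepsilon\in\{0,1\}$. If $\varepsilon=0$, then $\overline{\pi}$ is already realized holomorphically over $\mathcal{D}$ with $G$ acting in the standard way through $p$. If $\varepsilon=1$, I would use that $\varphi_0$ lifts to an automorphism of $G$ inducing the biholomorphism $\mathcal{D}\to\overline{\mathcal{D}}$: transporting the realization along this biholomorphism converts the holomorphic realization of $\overline{\pi}$ over $(\mathcal{D},\varphi_0\circ p)$ into a holomorphic realization of $\pi$ over $(\mathcal{D},p)$. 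In either case, after possibly passing from $\overline{\pi}$ to $\pi$, one of the two is realized in $\Gamma^{hol}(\mathcal{D},E)$ for a $G$-equivariant holomorphic vector bundle $E$ over $\mathcal{D}$ carrying the standard $G$-action.

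Then I would apply the classification of such bundles obtained from Tirao-Wolf \cite{TW}, together with the proposition describing the finite-dimensional representations $\theta$ of $\widetilde{\mathfrak{g}_-^*}$ that remain irreducible on $\widetilde{\mathfrak{u}}$. Since $\theta(\iota(0,0,V,0))=0$ and $\mathfrak{u}_1\subset\mathfrak{k}$, the fiber representation factors through the projections $p_{1,1},p_{2,1},p_2$ and $\Theta_1$ is one-dimensional; hence $E\cong\mathcal{D}\times V^{\Theta_2}$ with $G$ acting through the multiplier $m_{\Theta_1,c_2,\Theta_2}(g)=(p_{1,1}(g))^{c_1}(p_{2,1}(g))^{c_2}\Theta_2(p_2(g))$, for suitable $c_1\in(\mathfrak{z}(\widetilde{\mathfrak{l}_1})^*)_\mathbb{C}$ and $c_2\in(\mathfrak{z}(\widetilde{\mathfrak{u}_2})^*)_\mathbb{C}$ and an irreducible representation $\Theta_2$ of $\exp\widetilde{\mathfrak{u}_2}\simeq SU(r)$ on $\widehat{\mathbb{C}^\chi}$; compactness of $SU(r)$ lets us take $\Theta_2$ unitary, consistently with the unitarity of $\pi|_U$. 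Unwinding the isomorphism $\Gamma^{hol}(\mathcal{D},E)\simeq\Gamma^{hol}(\mathcal{D},\mathcal{D}\times V^{\Theta_2})$ yields exactly the formula defining $\pi_{c_1,c_2,\Theta_2}$, and since $\pi$ --- hence each of its holomorphic realizations --- carries a $G$-invariant inner product, $\pi$ or $\overline{\pi}$ is equivalent to a unitarization of $\pi_{c_1,c_2,\Theta_2}$.

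The step I expect to be the main obstacle --- and which is in fact already settled in Section \ref{sec:Holomorphic} --- is the compatibility check underlying the preceding paragraph: that the holomorphic structure assigned by Tirao-Wolf to the abstract bundle $G\times_U\widehat{\mathbb{C}^\chi}$ over $G/U$ agrees, under the biholomorphism $G/U\simeq\mathcal{D}$, with the one coming from the constant-fiber multiplier $m_{\Theta_1,c_2,\Theta_2}$; equivalently, that the pertinent extension of $d\tau$ to the complex subalgebra spanned by the antiholomorphic vector fields at the base point annihilates the nilpotent directions $\iota(0,0,V,0)$. Once this is established, the remaining bookkeeping --- the $\varepsilon$-dichotomy and the translation of the character $\chi$ into the data $(c_1,c_2)$ and $\Theta_2$ --- is routine.
\qed
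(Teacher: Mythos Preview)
Your proposal is correct and follows exactly the paper's approach: the theorem is stated in the paper with the phrase ``We have proved the following theorem,'' meaning its proof is precisely the assembly of the results of Sections~\ref{sec:Realization}--\ref{sec:Holomorphic} that you have outlined. In particular, your handling of the $\varepsilon$-dichotomy (explaining the ``$\pi$ or $\overline{\pi}$'' via the biholomorphism $\mathcal{D}\to\overline{\mathcal{D}}$ induced by $\varphi_0$), the reduction of $\Theta_1$ to a character using $\mathfrak{u}_1\subset\mathfrak{k}$, and the Tirao--Wolf step matching the fiber representation to the multiplier $m_{\Theta_1,c_2,\Theta_2}$ all mirror the paper's own reasoning.
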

For $c\in(\mathfrak{z}(\widetilde{\mathfrak{l}_1})^*)_\mathbb{C}$, let $\tau_c$ be the representation of $G$ on $\Gamma^{hol}(\mathcal{D},\mathcal{D}\times\mathbb{C})$ given by
\begin{equation*}
\tau_{c}(g)f(z)=(p_{1,1}(g))^{c}f(p(g^{-1})z)\quad(g\in G,f\in\Gamma^{hol}(\mathcal{D},\mathcal{D}\times\mathbb{C}) ,z\in\mathcal{D}).
\end{equation*}
The reproducing kernel of an invariant Hilbert space of $\Gamma^{hol}(\mathcal{D},\mathcal{D}\times V^{\Theta_2})$ must be of the form
\begin{eqnarray*}
K(Z,V,Z',V')=\det((Z-\overline{Z'})/2i-Q(V,V'))^\lambda\otimes \mathrm{Id}_{V^{\Theta_2}}\\((Z,V), (Z',V')\in\mathcal{D}, \lambda\in\mathbb{R})
\end{eqnarray*}
up to a constant, from which we see that $\pi_{c_1,c_2,\Theta_2}$ is unitarizable if and only if the representation $\tau_{c_1}$ of $G$ is unitarizable.
Since a maximal connected triangular Lie subgroup $B$ of $G$ acts simply transitively on $\mathcal{D}$, the unitarizability of $\tau_{c_1}$ is equivalent to the one of $\tau_{c_1}|_B$.
Let $c_1,c_1'\in(\mathfrak{z}(\widetilde{\mathfrak{l}_1})^*)_\mathbb{C}$, $c_2,c_2'\in(\mathfrak{z}(\widetilde{\mathfrak{u}_2})^*)_\mathbb{C}$, and $(\Theta_2,V^{\Theta_2})$, $(\Theta_2',V^{\Theta_2'})$ be irreducible unitary representations of $SU(r)$.
By the uniqueness (irreducibility) of a unitarization studied in \cite{Kobayashi}, we use the same symbol for a continuous representation and its unitarization (if it exists).
Then we have $\pi_{c_1,c_2,\Theta_2}\not\simeq\pi_{c_1',c_2',\Theta_2'}$ if $\tau_{c_1}\not\simeq \tau_{c_1'}$ as unitary representation of $G$.
Note that the latter condition is equivalent to $\tau_{c_1}|_B\simeq\tau_{c_1'}|_B$ (see \cite{Arashi20}).
Taking into account that $\pi_{c_1,c_2,\Theta_2}$ is the tensor product of two representations, we get the following theorem.
\begin{theorem}\label{thm:2}
If $\pi_{c_1,c_2,\Theta_2}$ and $\pi_{c_1'c_2',\Theta_2'}$ are unitarizable, then the following conditions are equivalent:
\begin{enumerate}
\item
$\pi_{c_1,c_2,\Theta_2}\simeq\pi_{c_1',c_2',\Theta_2'}$;
\item\label{cond2}
$\tau_{c_1}|_B\simeq\tau_{c_1'}|_B$, $c_2=c_2'$, and $\Theta_2\simeq \Theta_2'$.
\end{enumerate}
\end{theorem}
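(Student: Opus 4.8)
The plan is to organize the argument around the tensor product decomposition
$\pi_{c_1,c_2,\Theta_2}\simeq\tau_{c_1}\otimes\xi_{c_2,\Theta_2}$, where
$\xi_{c_2,\Theta_2}(g):=(p_{2,1}(g))^{c_2}\Theta_2(p_2(g))$ is a finite-dimensional representation of $G$. With this in hand, the implication from the second condition to the first is immediate: by \cite{Arashi20}, $\tau_{c_1}|_B\simeq\tau_{c_1'}|_B$ is equivalent to $\tau_{c_1}\simeq\tau_{c_1'}$ as unitary representations of $G$; and $c_2=c_2'$ together with $\Theta_2\simeq\Theta_2'$ gives $\xi_{c_2,\Theta_2}\simeq\xi_{c_2',\Theta_2'}$; tensoring the two equivalences and invoking the uniqueness of a unitarization \cite{Kobayashi} yields $\pi_{c_1,c_2,\Theta_2}\simeq\pi_{c_1',c_2',\Theta_2'}$.

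For the reverse implication, the relation $\tau_{c_1}|_B\simeq\tau_{c_1'}|_B$ is already available from the discussion preceding the theorem: $\pi_{c_1,c_2,\Theta_2}\simeq\pi_{c_1',c_2',\Theta_2'}$ forces $\tau_{c_1}\simeq\tau_{c_1'}$ as unitary representations of $G$, which is equivalent to $\tau_{c_1}|_B\simeq\tau_{c_1'}|_B$. What remains is to recover $c_2$ and $\Theta_2$. I would argue as follows. By the general theory of coherent state representations \cite{Lisiecki90,Lisiecki95}, the $G$-equivariant holomorphic bundle carrying a generic CS representation is determined by the representation up to isomorphism; hence the equivalence $\pi_{c_1,c_2,\Theta_2}\simeq\pi_{c_1',c_2',\Theta_2'}$ produces a $G$-equivariant isomorphism between the holomorphic vector bundles over $\mathcal{D}$ attached to the two triples. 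By Tirao--Wolf \cite{TW}, such an isomorphism is the same as an isomorphism of the fibre representations of $U$ compatible with the holomorphic extensions to $\widetilde{\mathfrak{g}_-^*}$. Reading off these extensions from the Proposition classifying the representations $\theta$ of $\widetilde{\mathfrak{g}_-^*}$, the matching of the $c_2\,\mathrm{tr}(\cdot)$-terms gives $c_2=c_2'$, while restricting the fibre representation to $\exp\widetilde{\mathfrak{u}_2}\simeq SU(r)$ gives $\Theta_2\simeq\Theta_2'$.

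The step I expect to be the main obstacle is making the above rigorous --- concretely, the assertion that the abstract equivalence $\pi_{c_1,c_2,\Theta_2}\simeq\pi_{c_1',c_2',\Theta_2'}$ really descends to a $G$-equivariant isomorphism of the underlying holomorphic vector bundles (equivalently, the tensor cancellation $\tau_{c_1}\otimes\xi_{c_2,\Theta_2}\simeq\tau_{c_1}\otimes\xi_{c_2',\Theta_2'}\Rightarrow\xi_{c_2,\Theta_2}\simeq\xi_{c_2',\Theta_2'}$). If one prefers to avoid the bundle formalism, an alternative route is to restrict the equivalence directly to subgroups of $U$. On $\exp\widetilde{\mathfrak{u}_2}\simeq SU(r)$ one has $\pi_{c_1,c_2,\Theta_2}|_{SU(r)}\simeq\rho\otimes\Theta_2$, where $\rho$ is the change-of-variables action on holomorphic sections and is independent of $(c_1,c_2,\Theta_2)$; comparing the decompositions into irreducible $SU(r)$-types --- equivalently, dividing by the nonzero element $\mathrm{ch}\,\rho$ in the (completed) character ring of $SU(r)$, which is an integral domain --- forces $\Theta_2\simeq\Theta_2'$. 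A parallel argument on a suitable one-parameter central subgroup of $U$, using that the corresponding action on holomorphic sections has discrete spectrum bounded below, then pins down $c_2$. Either route yields all three conditions, completing the proof.
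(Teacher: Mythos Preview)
Your proposal follows essentially the same line as the paper: both exploit the tensor decomposition $\pi_{c_1,c_2,\Theta_2}\simeq\tau_{c_1}\otimes\xi_{c_2,\Theta_2}$, the equivalence $\tau_{c_1}\simeq\tau_{c_1'}\Leftrightarrow\tau_{c_1}|_B\simeq\tau_{c_1'}|_B$ from \cite{Arashi20}, and the uniqueness of unitarization from \cite{Kobayashi}. The paper is extremely terse on the tensor-cancellation step---it simply writes ``taking into account that $\pi_{c_1,c_2,\Theta_2}$ is the tensor product of two representations''---and you have correctly located this as the point needing justification and supplied two ways to close it.

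One caveat on your second route. The cancellation $(\mathrm{ch}\,\rho)(\mathrm{ch}\,\Theta_2)=(\mathrm{ch}\,\rho)(\mathrm{ch}\,\Theta_2')$ in a completed character ring of $SU(r)$ presupposes that the $SU(r)$-isotypic multiplicities in $\rho$ are finite; but the change-of-variables action of $SU(r)$ on holomorphic functions over $\mathcal{D}$ has every isotype with infinite multiplicity (already the $SU(r)$-invariants contain all holomorphic functions of $Z$ alone). This can be repaired by restricting instead to the full isotropy group $U$ and using that the reproducing-kernel Hilbert space for $\tau_{c_1}$ has finite $U$-type multiplicities, or---more in the spirit of the paper---by following your first route: the CS orbit of an irreducible CS representation, and with it the isotropy line and hence the $G$-equivariant holomorphic line bundle, is determined by the representation; the Tirao--Wolf correspondence \cite{TW} together with the classification of representations of $\widetilde{\mathfrak{g}_-^*}$ then recovers $c_2$ and $\Theta_2$ exactly as you describe. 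Either repaired version completes the argument.
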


\begin{acknowledgement}
The author is supported by Tokyo Gakugei University and was supported by Foundation of Research Fellows, The Mathematical Society of Japan.
\end{acknowledgement}

\end{document}